\newtheorem{thm}{Theorem}
\newtheorem{thm*}{Theorem}
\newtheorem{lma}{Lemma}
\newtheorem{cor}{Corollary}
\theoremstyle{definition}
\theoremstyle{remark}
\newtheorem*{pf}{Proof}
\newtheorem{rmk}{Remark}[subsection]
\newcommand{\R}{{\mathbb{R}}}
\newcommand{\Z}{{\mathbb{Z}}}
\newcommand{\C}{{\mathbb{C}}}
\newcommand{\D}{{\mathbb{D}}}
\newcommand\vol{\operatorname{vol}}
\newcommand\sign{\operatorname{sign}}
\newcommand\lk{\operatorname{lk}}
\newcommand{\G}{\mathcal{G}}
\newcommand{\til}[1]{\widetilde{#1}}
\newcommand{\arr}[1]{\overrightarrow{#1}}
\newcommand{\om}{\omega}
\newcommand{\Om}{\Omega}
\def\Sign{\operatorname{\textbf{Sign}}}
\DeclareMathOperator{\Diff}{\mathrm{Diff}}
\DeclareMathOperator{\Ker}{\mathrm{Ker}}
\DeclareMathOperator{\Cal}{\mathrm{Cal}}
\def\o{\omega}
\begin{document}

\title{\textbf{On the large-scale geometry of the $L^p$-metric on the symplectomorphism group of the two-sphere.}}

\author{\textsc{Michael Brandenbursky and Egor Shelukhin}\\}

\date{}
\maketitle

\begin{abstract}
We prove that the vector space $\R^d$ of any finite dimension $d$ with the standard metric embeds in a bi-Lipschitz way into the group of area-preserving diffeomorphisms $\G$ of the two-sphere endowed with the $L^p$-metric for $p>2$. Along the way we show that the $L^p$-metric on the group $\G$ is unbounded for $p>2$ by elementary methods.
\end{abstract}

\tableofcontents

\section{Introduction and main results}

\subsection{Introduction}

The first results on the large-scale metric geometry of volume-preserving diffeomorphism groups equipped with the hydrodynamic $L^2$-metric were achieved around 1985 by A. Shnirel'man \cite{ShnirelmanGeometry} (cf. \cite{ShnirelmanGeneralized}). He proved that the diameter of the compactly supported diffeomorphism group of the three dimensional cube $I^3=(0,1)^3$ is finite, and conjectured that this diameter is infinite in the two-dimensional case $I^2=(0,1)^2.$ It is a folklore statement \cite{KhesinWendt, ArnoldKhesin, EliashbergRatiu} that Shnirel'man's boundedness result generalizes to arbitrary compact simply-connected Riemannian manifolds (with or without boundary) of dimension $3$ or higher.

Before we survey results pertaining to the question of unboundedness of the $L^p$-metric in the two-dimensional case, we briefly discuss the general method used to establish such a result. Usually, given a diffeomorphism group $\G$ one constructs a function $\Psi:\til{\G} \to \R$, whose absolute value provides a lower bound for the length of a path in $\G$ starting at the identity (up to some additive and multiplicative constants, possibly). This automatically implies that $|\Psi|$ is a lower bound for the corresponding norm on $\til{\G}$. Then, to obtain a result on the norm on $\G$ itself, one argues that $\Psi$ suitably descends to $\G$. This holds for example if $\Psi$ is constant on fibers $pr^{-1}(\phi)$ of the natural projection $pr: \til{\G} \to \G$, up to an error that is bounded uniformly in $\phi \in \G$. It is interesting to identify such a calibrating function in each of the cases considered.

Shnirel'man's conjecture for the two-dimensional case was settled in 1991 by Y. Eliashberg and T. Ratiu \cite{EliashbergRatiu}. In fact they proved two more general results. First the $L^p$-metric on the (identity component) of the group of symplectomorphisms $Symp_{c,0}(M,d\lambda)$ of any compact exact symplectic manifold (necessarily with boundary) is unbounded. Moreover its restriction (as a function of two variables) to the subgroup of Hamiltonian diffeomorphisms $Ham_c(M,d\lambda)$ is unbounded. One can further restrict the metric (as a function of two variables) to the kernel $\Ker(\Cal) \subset Ham_c(M,d\lambda)$ of the Calabi homomorphism \cite{CalabiHomomorphism} $\Cal: Ham_c(M,d\lambda) \to \R$, retaining unboundedness. Note that from this first result it follows as a special case that the $L^p$-metric is unbounded on the group of volume-preserving diffeomorphisms of any compact surface with boundary. The methods of proof of the first result involved the Calabi homomorphism as a key ingredient.

To address the case of closed surfaces, we review Eliashberg and Ratiu's second result. For any compact manifold $M$ with a volume form, whose first Betti number is non-zero and whose fundamental group has trivial center, the diameter of the group of volume-preserving diffeomorphisms endowed with the $L^p$-metric is infinite. From this result it follows that the diameter is infinite for all surfaces of genus $g \geq 2$. The methods of proof of this result involve measuring (on average) the trajectories of the flow against closed differential $1$-forms (hence the condition on the first Betti number).

For the two-torus $T^2$ the situation is slightly more involved. Firstly, we note that since the orbit of any point in $T^2$ under the action of a Hamiltonian loop is contractible (by the existence of the Seidel element \cite{seidel} or by a number of other arguments), one can apply either the original argument of Eliashberg-Ratiu or \cite[Theorem 1.2]{BrandenburskyKedra2} to prove that the $L^p$-metric on $Ham(T^2,dx \wedge dy)$ is unbounded. Then it is rather uncomplicated to upgrade this result to the unboundedness of the $L^p$-metric on $\Diff_0(T^2,dx\wedge dy)$. Alternatively, one can use a folklore argument measuring the displacement of a ball in the universal cover, to prove said unboundedness (this argument, related to us by L. Polterovich, resembles the proof of \cite[Part II, Lemma 5.7]{LalondeMcDuff12}).


Therefore the question that remained open was the case of the two-sphere $S^2$, which is neither exact nor has first cohomology. We note that remarkable progress was achieved in the $1990$'s and the $2000$'s in the study of a different metric on the group of Hamiltonian diffeomorphisms, introduced by H. Hofer \cite{HoferMetric}. We refer to \cite{LalondeMcDuffEnergy, LalondeMcDuff12, PolterovichS2, PolterovichBookGeometry} for several early results in this direction, and shall not survey this intensive area of research fully, for the sheer number of developments and for reasons of relevance to the issue at hand in this paper. We note however, that a result of L. Polterovich from $1998$ \cite{PolterovichS2} on the unboundedness of the Hofer metric on $Ham(S^2,dV)$ implies by the Sobolev inequality the unboundedness of the $L^p$-metric in the case of $S^2$ for exponents $p > 2$. However, since the Sobolev inequality in dimension two fails at exponent $p=2$, one does not deduce the unboundedness of the hydrodynamic $L^2$-metric from Polterovich's result. In this paper we reprove the unboundedness of the $L^p$-metric for $p>2$ in an elementary way.

The main result of this paper, generalizing unboundedness of the $L^p$-metric for $p>2,$ is the existence of bi-Lipschitz embeddings of normed vector spaces of arbitrary finite dimension into the group of symplectomorphisms of $S^2$ endowed with the $L^p$-metric for $p>2$. Our methods are related to a different line of research in two-dimensional Hamiltonian dynamics than described above, and have to do with braiding and relative rotation numbers of trajectories in extended phase space of time-dependent two-dimensional Hamiltonian flows. We note that Shnirel'man has proposed to use relative rotation numbers to bound from below the $L^2$-lengths of two-dimensional Hamiltonian paths in \cite{ShnirelmanGeneralized}. This direction is related to the method of Eliashberg and Ratiu by a theorem of J.-M. Gambaudo and E. Ghys \cite{GambaudoGhysEnlacements}. Gambaudo and Ghys proved that up to a multiplicative constant, the Calabi homomorphism is proportional to the relative rotation number of the trajectories of two distinct points in the two-disc $\D$ under a Hamiltonian flow, averaged over the configuration space of ordered pairs of distinct points $(z_1,z_2)$ in the two-disc (we will call this the average rotation number of the flow). While by virtue of this theorem it follows immediately that the average rotation number is a lower bound for the $L^p$-distance from the identity (since the Calabi invariant is), it is instructive to obtain this bound directly. Such an estimate was performed by J.-M. Gambaudo and M. Lagrange \cite{GambaudoLagrange} by a creative use of the H$\ddot{\text{o}}$lder inequality, to obtain a different proof for the unboundedness of the $L^p$-metric for the two-disc (a modification of which applies for $\Ker(\Cal)$ too). Their argument was later pushed further by M. Benaim and J.-M. Gambaudo \cite{BenaimGambaudo} to obtain quasi-isometric embeddings into
$\Ker(\Cal) \subset \Diff_c(\D,dx\wedge dy)$ of free groups on an arbitrary finite number of generators and of $\Z^d$ for all finite dimensions $d$.

To explain our methods we should explain the functions we use to calibrate the $L^p$-norm. These are quasimorphisms - functions that are additive with respect to the group action - up to an error which is uniformly bounded (as a function of two variables), on groups of area-preserving diffeomorphisms. The quasimorphisms we use were introduced and studied by Gambaudo and Ghys in a beautiful paper from 2004 \cite{GambaudoGhysCommutators}. These quasimorphisms essentially appear from invariants of braids traced out by the action a Hamiltonian path on an ordered $n$-tuple of distinct points in the surface (the traces should be suitably closed up to produce pure braids, and the resulting invariants averaged over the configuration space of $n$-tuples of distinct points). The general procedure for constructing quasimorphisms on volume-preserving diffeomorphism groups from quasimorphisms on the fundamental group (this is the case of the configuration space of 1-tuples) was described by Polterovich in \cite{PolterovichDynamicsGroups}, and quasimorphisms on diffeomorphism groups appearing from invariants of braids and related constructions were further studied in
\cite{PyTorus, PySurfacesQm, BrandenburskyKnots} and in other works.

In \cite{BrandenburskyLpMetrics} it was shown that for the case of $1$-tuples and for the case of $n$-tuples of points on the standard disc, the above quasimorphisms calibrate the $L^p$-norm, improving in particular the result of Benaim and Gambaudo to bi-Lipschitz embeddings of $\R^d$ with the $l^1$-metric into $\Ker(\Cal)$, and sharpening the results of Eliashberg-Ratiu (see also \cite{BrandenburskyKedra1, BrandenburskyKedra2}). In this paper we produce similar estimates for the case of the two-sphere. Technically, our case of $\Diff(S^2,dV)$ is more difficult than that of
${\Ker(\Cal) \subset \Diff_c(\D,dx\wedge dy)}$ because the required analytical and topological bounds require a more global approach and have to take into account the geometry of the sphere.

As in \cite{BrandenburskyLpMetrics} for the case of the two disc, our methods readily give a stronger result than unboundedness. Namely, we prove that for every $p>2$ the group $\Diff(S^2,dV)$ with the $L^p$-metric contains bi-Lipschitz embedded vector spaces $\R^d$ of arbitrary finite dimension $d$ (equipped with the standard metric). We note that the natural counterpart of this statement in Hofer geometry is completely open: it is not known whether or not $Ham(S^2,dV)$ equipped with the Hofer metric is quasi-isometric to $\R$.



\subsection{Preliminaries}
\subsubsection{The $L^p$-metric}
Let $M$ denote a compact connected oriented Riemannian manifold (possibly with boundary) with a volume form $\mu$. We denote by $\G=\Diff_{c,0}(M,\mu)$ the identity component of the group of diffeomorphisms of $M$ preserving $\mu$, that are $Id$ near the boundary if $\partial M \neq 0$. Alternatively, in the case of non-empty boundary, one can consider the open manifold $M \setminus \partial M$ and take compactly supported diffeomorphisms preserving $\mu$.

Given a path $\{\phi_t\}$ in $\G$ between $\phi_0$ and $\phi_1$, we define its $l^p$-length by
\[l_p(\{\phi_t\}) = \int_0^1 dt \, (\int_M |X_t|^p \mu)^{\frac{1}{p}},\]
where $X_t = \frac{d}{dt'}|_{t'=t} \phi_{t'} \circ \phi_t^{-1}$ is the time-dependent vector field generating the path $\{\phi_t\}$, and $|X_t|$ its length with respect to the Riemannian structure on $M$. As is easily seen by a displacement argument, this length functional determines a non-degenerate metric on $\G$ by the formula
\[d_p(\phi_0,\phi_1) = \inf \; l_p(\{\phi_t \}),\]
where the infimum runs over all paths $\{\phi_t\}$ in $\G$ between $\phi_0$ and $\phi_1$. It is immediate that this metric is right-invariant. We denote the corresponding norm on the group by
\[||\phi||_p = d_p(Id,\phi).\]
Clearly $d_p(\phi_0,\phi_1) = ||\phi_1\phi_0^{-1}||_p$. Similarly one has the $L^p$-norm on the universal cover $\til{\G}$ of $\G$, defined for $\til{\phi} \in \til{\G}$ as
\[||\til{\phi}||_p = \inf \; l_p(\{\phi_t\}),\]
where the infimum is taken over all paths $\{\phi_t\}$ in the class of $\til{\phi}$.

We note that up to bi-Lipschitz equivalence of metrics ($d$ and $d'$ are equivalent if $\frac{1}{C}d \leq d' \leq C d$ for a certain constant $C>0$) the $L^p$-metrics on $\G$ and on $\til{\G}$ are independent of the choice Riemannian structure and of the volume form $\mu$ compatible with the orientation on $M.$ In particular, the question of boundedness or unboundedness of the $L^p$-metric enjoys the same invariance property.

\subsubsection{Quasimorphisms}
The notion of a quasimorphism will play a key role in our arguments. Quasimorphisms are helpful tools for the study of non-abelian groups, especially those that admit few or no homomorphisms to the reals. A quasimorphism $r: G \to \R$ on a group $G$ is a real-valued function that satisfies
\[r(xy) = r(x) + r(y) + b(x,y),\]
for a function $b:G\times G \to \R$ that is uniformly bounded:
\[\delta(r): = \sup_{G\times G} |b| < \infty.\]

A quasimorphism $\overline{r}:G \to \R$ is called \textit{homogeneous} if $\overline{r}(x^k) = k \overline{r}(x)$ for all $x\in G$ and $k \in \Z$. To any quasimorphism $r:G\to \R$ there corresponds a unique homogeneous quasimorphism $\overline{r}$ that differs from $r$ by a bounded function:
\[\sup_G |\overline{r} - r| < \infty.\]
It is called the \textit{homogenization} of $r$ and satisfies
\[\overline{r}(x) = \lim_{n \to \infty} \frac{r(x^n)}{n}.\]

We refer to \cite{CalegariScl} for more information about quasimorphisms.

\subsection{Main results}

We consider the following situation. Our manifold is $M=S^2=\C P^1$ endowed with the Fubini-Study symplectic form $dV$ scaled to have total volume $2\pi$. Note that this is twice the Riemannian volume of the standard Fubini-Study metric on $S^2$, which is the Riemannian metric that we equip $S^2$ with. For a given integer $n$ we have the associated configuration space $X=X_n(M)$ of ordered $n$-tuples of distinct points in $M$. This space can be considered as the complement in the complex manifold $M^n=M \times ... \times M$ of a union of $n(n-1)/2$ complex hypersurfaces. For $1\leq i < j \leq n$ such a hypersurface $D_{ij}$ is defined by the equation $z_i = z_j$ for $z_i \in M, z_j \in M$. Note that the Fubini-Study Riemannian measure induces a measure $\nu$ of finite volume on $X$.

Given a real valued quasimorphism $r$ on the fundamental group $\Gamma=P_n(M) = \pi_1(X_n(M),m)$ for a fixed base-point $m \in X_n(M)$ there is a natural way to induce a real valued quasimorphism on the universal cover $\widetilde{\G}$ of the group $\G$ of all volume preserving diffeomoprhisms of $M=S^2$. Since in our case of $M=S^2$ the fundamental group of $\G$ is finite, this induces a quasimorphism on $\G$ itself. The construction is carried out by the following steps (cf. \cite{GambaudoGhysCommutators,PolterovichDynamicsGroups,BrandenburskyKnots}).

\begin{enumerate}
\item For all $x \in X'=X \setminus Z$, with $Z$ a closed negligible subset (e.g. a union of submanifolds of positive codimension) choose a smooth path $\gamma(x):[0,1] \to X$ between the basepoint $m\in X$ and $x$. Make this choice continuous in $X'$. We perform this choice by choosing such a system of paths on $M$ itself and then considering the induced coordinate-wise paths in $M^n$, and picking $Z$ to ensure that these induced paths actually lie in $X$. After choosing the system of paths $\{\gamma(x)\}_{x \in X\setminus Z}$ we extend it measurably to $X$ (obviously, no numerical values computed in the sequel will depend on this extension). We call the resulting choice a system of "short paths". On the two-sphere, we use minimal geodesics as paths on the manifold itself. We note, however, that the resulting quasimorphism will not depend, up to bounded error, on the choice of the system of short paths - as long as the new system of paths differs from the old one by a system of loops whose image in $\pi_1(X,m)$ is bounded. For the calculations that follow we will make a choice of short paths that is equivalent in this manner to the one induced by minimal geodesics. We shall specify it later.
\item Given a path $\{\phi_t \}_{t \in [0,1]}$ in $\G$ starting at $Id$, and a point $x \in X$ consider the path $\{\phi_t \cdot x\}$, to which we then catenate the corresponding short paths. That is consider the loop
    \[l(x) := \gamma(x) \# \{\phi_t \cdot x\} \# \gamma(y)^{-1} \in \Om_m X,\]
     where $^{-1}$ denotes time reversal. Hence we obtain for each $x \in X$ an element $[l(x)] \in \pi_1(X,m)$. Consequently applying the quasimorphism $r:\pi_1(X,m) \to \R$ we obtain a measurable function $g:X \to \R$. Namely $g(x) = r([l(x)])$. The quasimorphism $\Phi$ on $\widetilde{\G}$ is defined by
     \[\Phi([\{\phi_t\}]) = \int_X g \,d \nu.\]
     It is immediate to see that this function is well-defined by topological reasons. The quasimorphism property follows by the quasimorphism property of $r$ combined with finiteness of volume. The fact that the function $g$ is absolutely integrable can be shown to hold a-priori as in Appendix \ref{Appendix: Integrability}. We note that by Tonelli's theorem it also follows as a by-product of the proof of our main theorem.
\item Of course our quasimorphism can be homogenized, to obtain a homogeneous quasimorphism $\overline{\Phi}$.
\end{enumerate}
\vspace{2mm}
\begin{rmk}
In our case, by the result of Smale \cite{Smale} $\pi_1(\G) = \Z/2\Z$, and hence the quasimorphisms descend to quasimorphisms on $\G$, e.g. by minimizing over the two-element fibers of the projection $\widetilde{\G} \to \G$. For $\overline{\Phi}$, the situation is easier since it vanishes on
$\pi_1 (\G)$, and therefore depends only on the image in $\G$ of an element in $\til{\G}$. We keep the same notations for the induced quasimorphisms.
\end{rmk}


It is well-known (cf. \cite[Chapter 4]{BirmanBook}) that the fundamental group $\Gamma$ is generated by the homotopy classes of small loops $\gamma_{ij}$ around the partial diagonals $D_{ij}.$ We estimate the above quasimorphisms using the standard affine chart $u_0: \C \xrightarrow{\sim} U_0 \subset \C P^1, z \mapsto [z,1]$ of full measure. Note that the generators $\gamma_{ij}$ can be chosen to lie in the image on $\pi_1$ of the composition of canonical maps \[X_n(\C) \xrightarrow{\sim} X_n(U_0) := X_n(\C P^1) \cap U_0 \times ... \times U_0 \to X_n(\C P^1),\] the intersection taken inside $(\C P^1)^n.$

By a theorem of Arnol'd \cite{ArnoldColoredBraids} the first cohomology of $X_n(\C)$ is generated by $(-i)$ times the cohomology classes of closed $1$-forms $\alpha_{ij},$ $1\leq i < j \leq n$, satisfying $\int_{\gamma_{ij}} \alpha_{i'j'} = i \delta^{i'}_i \delta^{j'}_j$.

Such $\alpha_{ij}$ can be constructed as the restriction to $X_n(\C)$ of the pull-back $p_{ij}^* \alpha,$ for
$$p_{ij}: \C^n\setminus D_{ij} \to \C^2 \setminus D$$
the natural projection ($D$ being the diagonal in $\C^2$) and
\[\displaystyle{\alpha = \frac{1}{2\pi}\frac{d(a- z)}{a-z}}\] for coordinates $(a,z)$ on $\C \times \C,$
which correspond in our chosen chart to homogenous coordinates $([a,1],[z,1])$ on $\C P^1 \times \C P^1$. We set $\theta_{ij} := Im(\alpha_{ij})$.

For a $1$-form $\theta$ on a manifold $Y$ and a smooth parameterized path $\gamma:[0,1] \to Y$ set
\[\int_\gamma |\theta| := \int_0^1 |\theta_{\gamma(t)}(\dot{\gamma}(t))| dt.\]
Clearly, for a smooth loop $\gamma$ we have $|\int_\gamma \theta | \leq \int_\gamma |\theta |$.
Note moreover that for any loop $\gamma'$ homologous to $\gamma$ we similarly have $|\int_\gamma \theta | \leq \int_{\gamma'} |\theta |$.
Moreover, $\int_{\gamma} |\theta| = \int_{\gamma^{-1}} |\theta|$, where $\gamma^{-1}$ is the time-reversal of $\gamma$.

Our first theorem is the following estimate:
\vspace{2mm}
\begin{thm}\label{Theorem: estimate}
For every $p > 2$ the quasimorphism $\Phi$ is controlled from above by the $L^p$-norm. Namely there exists a constant $C>0$ such that for each path $\arr{\phi}=\{\phi_t\}$ in $\G$ starting at $Id$, we have
\[|\Phi([\arr{\phi}])| \leq C \cdot (l_p(\arr{\phi})+1).\]
\end{thm}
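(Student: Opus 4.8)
The goal is to bound $|\Phi([\arr\phi])| = \bigl|\int_X g\,d\nu\bigr|$ by $C(l_p(\arr\phi)+1)$. The plan is to estimate the integrand $g(x) = r([l(x)])$ pointwise, or rather to estimate its integral over $X$, by splitting the loop $l(x) = \gamma(x)\#\{\phi_t\cdot x\}\#\gamma(y)^{-1}$ into its three pieces. Since $r$ is a quasimorphism on $\Gamma = P_n(M)$ and $H^1(X_n(\C))$ is generated by the classes $[\theta_{ij}]$ (up to the $-i$ normalization), the homogeneous quasimorphism on $\Gamma$ dominated by the length of the braid is controlled by $\sum_{i<j}|\!\int_{\text{loop}}\theta_{ij}|$ plus the word length with respect to the generators $\gamma_{ij}$. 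More precisely, for any quasimorphism $r$ on $P_n(M)$ there is a constant $D$ with $|r(b)| \le D\cdot(\ell(b)+1)$ where $\ell(b)$ is the number of crossings (or some geometric length) of a representative braid $b$; so it suffices to bound $\int_X \ell([l(x)])\,d\nu(x)$ by a multiple of $l_p(\arr\phi)+1$. The contributions of the short paths $\gamma(x)$ and $\gamma(y)^{-1}$ are handled by the uniform-bounded-error clause in the construction: changing the system of short paths only changes $\Phi$ by a bounded amount, and the minimal-geodesic system contributes a fixed finite constant to $\int_X$ (this is where the "$+1$" comes from), so the heart of the matter is the middle piece $\{\phi_t\cdot x\}$.

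The key step is therefore to bound
\[
\int_X \Bigl(\sum_{i<j} \int_{\{\phi_t\cdot x\}} |\theta_{ij}|\Bigr)\, d\nu(x)
\]
by $\text{const}\cdot l_p(\arr\phi)$. Unwinding the definition, $\int_{\{\phi_t\cdot x\}}|\theta_{ij}|$ only involves the $i$-th and $j$-th coordinates $\phi_t(x_i), \phi_t(x_j)$, so by Fubini the whole integral reduces (after multiplying by the volume of the remaining $n-2$ factors) to a sum of $\binom n2$ identical integrals of the form $\int_{M\times M} \int_0^1 |\theta_{a,z}(\dot{(\phi_t a, \phi_t z)})|\,dt\,d\mu(a)d\mu(z)$, where $\theta = \mathrm{Im}\,\alpha$ and $\alpha = \frac{1}{2\pi}\frac{d(a-z)}{a-z}$ in the affine chart. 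The pointwise integrand is of the form $\frac{|\,\langle X_t(a)-X_t(z),\, J(a-z)\rangle\,|}{|a-z|^2}$ up to the chart's conformal factor; the singularity along the diagonal is $\sim 1/|a-z|$. So I would switch the order of integration to $\int_0^1 dt \int_{M\times M} \frac{|X_t(a)| + |X_t(z)|}{d(a,z)}\, (\text{bounded chart factors})\, d\mu(a)d\mu(z)$ and estimate, for fixed $t$, the inner double integral by Hölder's inequality with exponents $p$ and its conjugate $q = p/(p-1)$: $\int_{M\times M}\frac{|X_t(a)|}{d(a,z)}\le \|X_t\|_{L^p(\mu)}\cdot \bigl(\int_{M\times M} d(a,z)^{-q}\,d\mu\,d\mu\bigr)^{1/q} \cdot (\text{vol factors})$. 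The point is that $d(a,z)^{-q}$ is integrable over $M\times M$ precisely when $q < 2$, i.e. $p>2$ — this is exactly where the hypothesis $p>2$ enters, mirroring the failure at $p=2$ noted in the introduction. Integrating the resulting bound $\|X_t\|_{L^p}\cdot\text{const}$ over $t\in[0,1]$ gives $\text{const}\cdot \int_0^1 \|X_t\|_p\,dt = \text{const}\cdot l_p(\arr\phi)$, as desired.

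I expect the main obstacle to be the global geometry of the sphere, as the authors themselves emphasize. The $1$-form $\alpha$ and its primitive are written in the affine chart $U_0$, which has full measure but is not all of $S^2$; near the complement (the "point at infinity" in each coordinate) the conformal factor relating $|d(a-z)/(a-z)|$ to the Fubini–Study distance degenerates, and one must check that $|\theta_{ij}|$ is nonetheless controlled by (a bounded multiple of) $d_{S^2}(a,z)^{-1}\bigl(|X_t(a)|+|X_t(z)|\bigr)$ uniformly over all of $M\times M$, including there. Equivalently: the closed form $\theta_{ij}$ on $X_n(S^2)$ (defined intrinsically via the homology generators, independently of the chart) has a singularity along $D_{ij}$ of the expected order and is otherwise smooth and bounded on the compact manifold-with-singularity; this requires a local analysis of $\alpha$ in an alternate chart near infinity and a patching argument. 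A secondary technical point is justifying that the word-length / crossing-number bound for the quasimorphism $r$ on $P_n(S^2)$ is itself controlled by $\sum_{i<j}\int|\theta_{ij}|$ up to bounded error — i.e. that one can pass from the homogeneous-quasimorphism estimate on $H^1$ to a genuine bound on $r$ applied to the specific loop class $[l(x)]$ — but this is essentially formal given that $r$ differs from a linear combination of the $\int\theta_{ij}$ by a function of bounded defect, combined with the standard fact $|r(b)|\le (\delta(r))\cdot(\text{word length})$.
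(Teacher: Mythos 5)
Your overall architecture matches the paper's: bound the integrand $r([l(x)])$ by the word length of $[l(x)]$, convert word length to crossing numbers and hence to $\sum_{i<j}\int_{l(x)}|\theta_{ij}|$ via the coarea formula, dispose of the two short-path terms by a uniform bound, and attack the middle term $\int_X d\nu\int_{\{\phi_t\cdot x\}}|\alpha_{ij}|$ by reducing to $n=2$ and applying H\"older with exponents $p$ and $q=p/(p-1)$. But there is a genuine gap exactly at the point you flag as ``the main obstacle,'' and the resolution you anticipate cannot work. You hope that $\theta_{ij}$, viewed on $X_2(S^2)$, ``has a singularity along $D_{ij}$ of the expected order and is otherwise smooth and bounded,'' so that $|\iota_{X^2_t}\alpha|$ is uniformly dominated by $\big(|X_t(a)|+|X_t(z)|\big)/d_{S^2}(a,z)$. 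This is false, and not merely for the particular representative $\alpha=\frac{1}{2\pi}\frac{d(a-z)}{a-z}$: the projection $X_2(S^2)\to S^2$ onto the first factor has contractible fibers, so $H^1(X_2(S^2);\R)=0$ and no closed $1$-form on all of $X_2(S^2)$ can have the prescribed period around the diagonal; $\alpha$ necessarily acquires additional logarithmic poles along the divisors $\{a=\infty\}$ and $\{z=\infty\}$. Concretely, $|\iota_{X^2_t}\alpha|\le\frac{1}{2\pi}\,\frac{(1+|a|^2)|X_t(a)|_{Sph}+(1+|z|^2)|X_t(z)|_{Sph}}{|a-z|}$ while $|a-z|$ is comparable to $d_{S^2}(a,z)\sqrt{(1+|a|^2)(1+|z|^2)}$, which leaves the unbounded factors $\sqrt{(1+|a|^2)/(1+|z|^2)}$; letting $a\to\infty$ with $z$ fixed sends $|\iota_{X^2_t}\alpha|$ to infinity while your proposed majorant stays bounded.

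A symptom of this is that you have mislocated the source of the hypothesis $p>2$. In your scheme it enters through the diagonal singularity, via $\int_{M\times M}d(a,z)^{-q}<\infty$ iff $q<2$; but on a surface the singularity $1/d(a,z)$ is harmless ($\int_M d(a,z)^{-1}d\mu(z)$ is uniformly bounded), so if your pointwise majorant were correct the argument would close for every $p\ge 1$ --- in particular for $p=2$, exactly the case the introduction explains remains open. In the paper the condition $p>2$ comes from the behavior at infinity: after H\"older one is left with the weight $\psi(a)=2(1+|a|^2)\int_{\C}\frac{1}{|z-a|}\frac{dm(z)}{(1+|z|^2)^2}$, which Lemma \ref{Lemma: estimate an integral} shows grows like $(1+|a|^2)^{1/2}$, and $\int_\C(1+|a|^2)^{q/2}d\mu(a)=2\int_\C(1+|a|^2)^{(q-4)/2}dm(a)$ converges precisely when $q<2$. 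The chart factors you hoped to absorb by ``patching'' are thus the entire analytic content of the proof, and your argument as written does not close. (A minor further point: in your opening paragraph you write the topological bound with $\sum_{i<j}\big|\int\theta_{ij}\big|$, absolute value outside the integral; that quantity only records winding numbers and cannot control word length --- the correct quantity, which you do use later, is $\sum_{i<j}\int|\theta_{ij}|$.)
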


It follows immediately that
\vspace{2mm}
\begin{cor} There exists a constant $C'>0$ such that
\[|\Phi(-)| \leq C' \cdot (||-||_p+1),\]
for all $p>2$.
\end{cor}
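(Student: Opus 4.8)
The plan is to deduce this estimate directly from Theorem~\ref{Theorem: estimate}, the only additional ingredients being a bookkeeping argument for the passage from $\til{\G}$ to $\G$ and the passage from a single path to the infimum defining $||\cdot||_p$.

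First I would record the (uniformly bounded) ambiguity in the descent of $\Phi$ to $\G$ explained in the Remark. Since $\pi_1(\G) = \Z/2\Z$ by the result of Smale, each $\phi \in \G$ lifts to exactly two elements of $\til{\G}$, say $[\arr{\phi}]$ and $[\arr{\phi}]\cdot e$, where $e \in \pi_1(\G) \subset \til{\G}$ is the nontrivial element. The quasimorphism property of $\Phi$ on $\til{\G}$ then gives $|\Phi([\arr{\phi}]\cdot e) - \Phi([\arr{\phi}])| \leq |\Phi(e)| + \delta(\Phi) =: M$, a constant that does not depend on $\phi$. Since $\Phi$ on $\G$ is, by definition, obtained by minimizing over the two-element fibre, it follows that $|\Phi(\phi) - \Phi([\arr{\phi}])| \leq M$ for \emph{either} lift $[\arr{\phi}]$ of $\phi$.

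Next, fixing $p > 2$ and $\phi \in \G$, I would take an arbitrary path $\arr{\phi} = \{\phi_t\}$ in $\G$ from $Id$ to $\phi$ and apply Theorem~\ref{Theorem: estimate} together with the previous step:
\[ |\Phi(\phi)| \;\leq\; |\Phi([\arr{\phi}])| + M \;\leq\; C\cdot(l_p(\arr{\phi}) + 1) + M . \]
Taking the infimum over all such paths and using $\inf_{\arr{\phi}} l_p(\arr{\phi}) = d_p(Id,\phi) = ||\phi||_p$, I obtain $|\Phi(\phi)| \leq C\cdot(||\phi||_p + 1) + M \leq C'\cdot(||\phi||_p + 1)$ with $C' := C + M$. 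As $p$ was arbitrary (with $C$, and hence $C'$, allowed to depend on $p$), this is exactly the claimed inequality.

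I do not anticipate a genuine obstacle: all of the analytic and topological content is contained in Theorem~\ref{Theorem: estimate}, and the only point that needs a moment's thought is the descent from $\til{\G}$ to $\G$, which is innocuous precisely because $\pi_1(\G)$ is finite, forcing the two lifts of any $\phi$ to have $\Phi$-values within the universal constant $M$ of one another.
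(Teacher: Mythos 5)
Your argument is correct and is exactly the intended one: the paper derives this corollary from Theorem~\ref{Theorem: estimate} simply by taking the infimum over paths from $Id$ to $\phi$ (it states that it "follows immediately"), and your additional care with the two-element fibres of $\til{\G}\to\G$ — bounding the discrepancy between the two lifts by $|\Phi(e)|+\delta(\Phi)$ via the quasimorphism property — correctly fills in the only detail the paper leaves implicit (in its Remark on descent). No issues.
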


By averaging and the triangle inequality for norms we have
\vspace{2mm}
\begin{cor} For all $p>2$ we have
\[|\overline{\Phi}(-)| \leq C' \cdot ||-||_p\hspace{1mm}.\]
\end{cor}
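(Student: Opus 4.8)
The plan is to derive this as a formal consequence of the preceding corollary, using the limit formula $\overline{\Phi}(\phi)=\lim_{n\to\infty}\Phi(\phi^n)/n$ for the homogenization together with subadditivity of the $L^p$-norm under composition. I fix an element $\phi$; by the Remark it suffices to argue on $\G$ itself, since $\overline{\Phi}$ vanishes on $\pi_1(\G)$ and therefore is well-defined on $\G$, and there the homogenization formula still holds.

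First I would record that $\|\cdot\|_p$ is subadditive under composition: since $d_p$ is a right-invariant metric with $d_p(\phi_0,\phi_1)=\|\phi_1\phi_0^{-1}\|_p$, the triangle inequality gives $\|\psi_1\psi_2\|_p=d_p(Id,\psi_1\psi_2)\le d_p(Id,\psi_2)+d_p(\psi_2,\psi_1\psi_2)=\|\psi_2\|_p+\|\psi_1\|_p$, and hence by induction $\|\phi^n\|_p\le n\,\|\phi\|_p$ for every $n\ge1$. (Equivalently one concatenates length-minimizing paths for the individual factors, using right-invariance of the length functional $l_p$ to control the total length.)

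Next I would apply the previous corollary to each power $\phi^n$ and divide by $n$:
\[\frac{|\Phi(\phi^n)|}{n}\;\le\;\frac{C'\bigl(\|\phi^n\|_p+1\bigr)}{n}\;\le\;C'\Bigl(\|\phi\|_p+\tfrac1n\Bigr).\]
Passing to the limit $n\to\infty$ and invoking $\overline{\Phi}(\phi)=\lim_{n\to\infty}\Phi(\phi^n)/n$ yields $|\overline{\Phi}(\phi)|\le C'\,\|\phi\|_p$, which is the assertion.

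I do not expect a genuine obstacle here: this is the standard device by which an estimate of the form $|q|\le C(\|\cdot\|+1)$ for a quasimorphism $q$ on a group with a right-invariant norm upgrades to $|\overline{q}|\le C\|\cdot\|$ for its homogenization. The only point meriting a word of care is that $\overline{\Phi}$ must be computed through the averages $\Phi(\phi^n)/n$, so that the additive term contributes $C'/n\to0$ and drops out, while the multiplicative constant $C'$ survives precisely because $\|\phi^n\|_p/n\le\|\phi\|_p$; one also uses that $\overline{\Phi}$ is genuinely defined on $\G$, which is the content of the Remark.
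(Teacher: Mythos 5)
Your argument is correct and is exactly what the paper intends by its one-line justification ``by averaging and the triangle inequality for norms'': you use $\|\phi^n\|_p\le n\|\phi\|_p$ from right-invariance, apply the previous corollary to $\phi^n$, divide by $n$, and pass to the limit defining the homogenization. No gap; this matches the paper's approach.
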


By choosing any such quasimorphism $\Phi$ that is unbounded (equivalently, non-zero such $\overline{\Phi}$) we obtain the following (c.f. \cite{BrandenburskyLpMetrics}).
\vspace{2mm}
\begin{cor}\label{Corollary: unbounded}
The metrics $d_p$ on $\til{\G}$ and on $\G$ are unbounded for $p>2$.
\end{cor}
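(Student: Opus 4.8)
The plan is to derive unboundedness directly from the preceding Corollary (the bound $|\overline{\Phi}(-)| \le C'\,\|-\|_p$) together with two facts: that $\overline{\Phi}$ is homogeneous, and that the construction feeding a quasimorphism $r$ on $\Gamma=P_n(S^2)$ through the Gambaudo--Ghys averaging procedure can be arranged to produce a \emph{nonzero} $\overline{\Phi}$.

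First I would record the existence of such a nonzero $\overline{\Phi}$. For $n=1,2,3$ the group $P_n(S^2)$ is finite (for instance $\pi_1(X_3(S^2))\cong\Z/2\Z$, as $X_3(S^2)$ is homotopy equivalent to $\R P^3$), hence carries no nonzero homogeneous quasimorphism; but for $n\geq 4$ the forgetful fibration $X_n(S^2)\to X_3(S^2)$ has fiber a configuration space of $\C$ minus two points, so $P_n(S^2)$ contains nonabelian free subgroups and carries an infinite-dimensional space of homogeneous quasimorphisms. By Gambaudo--Ghys \cite{GambaudoGhysCommutators}, at least one of the resulting quasimorphisms $\overline{\Phi}$ on $\G$ is not identically zero; concretely, one evaluates $\overline{\Phi}$ on an explicit autonomous element --- e.g.\ a product of rotations supported in disjoint embedded disks --- and checks, as in the disk case \cite{GambaudoLagrange,BenaimGambaudo,BrandenburskyLpMetrics}, that the corresponding average of relative braiding invariants is nonzero. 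Fix such a $\Phi$ and an element $g\in\G$ with $\overline{\Phi}(g)=c\neq 0$; since $\overline{\Phi}$ on $\til{\G}$ vanishes on $\pi_1(\G)=\Z/2\Z$ and thus factors through $\G$, every lift $\til g\in\til{\G}$ of $g$ also satisfies $\overline{\Phi}(\til g)=c$.

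Next I would use homogeneity: it gives $\overline{\Phi}(g^k)=kc$ and $\overline{\Phi}(\til g^{\,k})=kc$ for all $k\in\Z$. Plugging these into the Corollary yields $\|g^k\|_p\geq |c|\,k/C'$ and $\|\til g^{\,k}\|_p\geq |c|\,k/C'$, and letting $k\to\infty$ shows that the norms $\|\cdot\|_p$ --- hence the right-invariant metrics $d_p$ --- on both $\G$ and $\til{\G}$ are unbounded for every $p>2$.

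I expect the only substantive step to be the non-triviality of some $\overline{\Phi}$, i.e.\ exhibiting a test diffeomorphism on which the averaged braiding quasimorphism is provably nonzero; everything else is a formal consequence of Theorem \ref{Theorem: estimate} via homogenization. (Alternatively one could invoke the paper's main bi-Lipschitz embedding theorem, which \emph{a fortiori} produces such a $g$, but arguing non-triviality directly keeps the corollary self-contained.)
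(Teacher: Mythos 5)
Your proposal is correct and follows essentially the same route as the paper: the paper deduces unboundedness from the bound $|\overline{\Phi}(-)|\leq C'\|-\|_p$ by choosing a non-zero homogeneous $\overline{\Phi}$ (hence unbounded on powers of a suitable $g$), and establishes non-triviality exactly as you suggest, in Section \ref{Section: examples}, by evaluating the averaged signature quasimorphisms $\overline{\Sign}_{2n}$ ($n\geq 2$, coming from $P_n(S^2)$ with $n\geq 4$) on the explicit rotation-type diffeomorphisms $f_\omega$ via the Gambaudo--Ghys formula \eqref{eq:GG-sign-comp}. Your identification of the non-triviality of some $\overline{\Phi}$ as the only substantive input is accurate.
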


By choosing a sequence $\overline{\Phi}_1,...,\overline{\Phi}_d$ of homogeneous quasimorphisms satisfying $\overline{\Phi}_i(\phi_j) = \delta_{ij}$ for a sequence $\phi_1,...,\phi_n$ of commuting diffeomorphisms in $\G$ we enhance this to the main result of this paper.
\vspace{2mm}
\begin{cor}\label{Corollary: bi-Lipschitz lattices}
For each $p>2$ and $d\geq 1$, there is a bi-Lipschitz embedding $\R^d \to \G$ of groups, where the real vector space $\R^d$ is endowed with the metric coming from the $l^1$-norm, and $\G$ is endowed with the $L^p$-metric.
\end{cor}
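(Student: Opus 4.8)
The plan is to realize the embedding by a family of commuting autonomous flows supported in disjoint discs, to bound the $L^p$-norm of the resulting group elements from below using the homogeneous quasimorphisms $\overline{\Phi}_i$, and to bound it from above by an elementary length computation.

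First I would fix $d$ pairwise disjoint embedded open discs $U_1,\dots,U_d\subset S^2$ and, on each $U_j$, an autonomous Hamiltonian supported in $U_j$ which in an area-preserving chart is radially symmetric and equal to a nonzero multiple of the squared radius near the centre; let $\{\psi_j^t\}_{t\in\R}$ be its Hamiltonian flow and $X_j$ its (time-independent) generating vector field. Then each $\psi_j^t$ lies in $\G$, is supported in $U_j$, and rotates the core of $U_j$ nontrivially. Because the supports are disjoint the flows pairwise commute, so
\[
f\colon\R^d\longrightarrow\G,\qquad f(t_1,\dots,t_d)=\psi_1^{t_1}\circ\cdots\circ\psi_d^{t_d},
\]
is a group homomorphism with abelian image. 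Since $d_p$ is right-invariant and $f$ has abelian image, $d_p(f(s),f(t))=\|f(s-t)\|_p$ for $s,t\in\R^d$, so it suffices to prove two-sided linear bounds $\tfrac1C\,\|t\|_{l^1}\le\|f(t)\|_p\le C\,\|t\|_{l^1}$.

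For the upper bound I would note that the path $\{\psi_j^{\tau t_j}\}_{\tau\in[0,1]}$ joins $Id$ to $\psi_j^{t_j}$ with time-independent generating vector field $t_jX_j$, so $\|\psi_j^{t_j}\|_p\le|t_j|\,\bigl(\int_{S^2}|X_j|^p\,dV\bigr)^{1/p}=:c_j\,|t_j|$; concatenating these paths and using the triangle inequality for $\|\cdot\|_p$ gives $\|f(t)\|_p\le(\max_jc_j)\,\|t\|_{l^1}$. For the lower bound I would choose quasimorphisms $\Phi_i$ of the type constructed above, induced from quasimorphisms $r_i$ on $P_n(S^2)$ for a suitably large $n$, with homogenizations $\overline{\Phi}_i$; the corollary bounding $\overline{\Phi}$ by the $L^p$-norm gives $|\overline{\Phi}_i(-)|\le C'\|-\|_p$. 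The a priori estimate $|\Phi_i(\psi_j^t)|\le C\,(c_j|t|+1)$ supplied by Theorem~\ref{Theorem: estimate} forces the additive map $t\mapsto\overline{\Phi}_i(\psi_j^t)$ to be linear, and since the $\psi_j^t$ pairwise commute, additivity of homogeneous quasimorphisms on commuting elements yields $\overline{\Phi}_i(f(t))=\sum_j t_j\,\overline{\Phi}_i(\psi_j^1)$. If the $d\times d$ matrix $A=\bigl(\overline{\Phi}_i(\psi_j^1)\bigr)$ is invertible, then replacing the tuple $(\overline{\Phi}_i)$ by $A^{-1}$ applied to it (again homogeneous quasimorphisms controlled by $\|\cdot\|_p$, with a worse constant $C''$) gives $\overline{\Phi}_i(f(t))=t_i$, whence $\|t\|_{l^1}=\sum_i|\overline{\Phi}_i(f(t))|\le d\,C''\,\|f(t)\|_p$. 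Combined with the upper bound this yields the bi-Lipschitz embedding, which is in particular injective since $t$ is recovered from $(\overline{\Phi}_i(f(t)))_i$.

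The main obstacle specific to this corollary is the invertibility of $A$: one must choose $n$, the discs $U_j$, and the quasimorphisms $r_i$ so that the $d$ localized rotations are detected independently. I expect this to follow by the methods of \cite{GambaudoGhysCommutators} and \cite{BrandenburskyLpMetrics}, taking $n$ large enough and letting $r_i$ be a braid invariant that registers linking among a distinguished subset of points constrained to $U_i$ but is blind to motions supported away from $U_i$; a Gambaudo--Ghys-type identification (cf.\ \cite{GambaudoGhysEnlacements}) of such a localized quasimorphism, restricted to diffeomorphisms supported in $U_i$, with the local Calabi/rotation data of $U_i$ then shows that $A$ may be arranged to be diagonal with nonzero entries. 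Making this last identification rigorous on the sphere, rather than on the disc --- keeping track of the global geometry exactly as in the proof of Theorem~\ref{Theorem: estimate} --- is where the real work sits; with it in hand the corollary follows formally as above.
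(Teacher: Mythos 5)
Your skeleton matches the paper's: commuting autonomous flows supported in disjoint regions, an upper bound by concatenating the flow paths, and a lower bound via homogeneous quasimorphisms controlled by the $L^p$-norm (your Cauchy-functional-equation argument for linearity of $t\mapsto\overline{\Phi}_i(\psi_j^t)$ is fine). But the one step you defer --- the invertibility of the matrix $A=\bigl(\overline{\Phi}_i(\psi_j^1)\bigr)$ --- is precisely the content-bearing step of the corollary, and your proposed route to it is not the paper's and is not obviously workable. You suggest building quasimorphisms $r_i$ on $P_n(S^2)$ that are ``blind to motions supported away from $U_i$,'' so that $A$ becomes diagonal. The difficulty is that the quasimorphisms $\Phi$ of this paper are averages over the \emph{entire} configuration space $X_n(S^2)$: even if $\psi_j$ is supported in $U_j$, the configurations $x$ range over all of $S^2$, so there is no natural way to localize the braid invariant to one disc. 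Moreover, a genuinely localized invariant of disc-supported diffeomorphisms would essentially be the Calabi invariant of the restriction, which is not well defined as a function on $\Diff(S^2,dV)$ (fragmentations are non-unique), so ``making the identification rigorous on the sphere'' is not merely technical --- it is the obstruction.

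The paper avoids this entirely by using a specific computable family: the signature quasimorphisms $\overline{\Sign}_{2n}$ induced from $\overline{s}_n$ on $P_n(S^2)$, together with the Gambaudo--Ghys formula
\[
\overline{\Sign}_{2n}(f_\o)=\frac{n}{2}\int_{-1}^1\bigl(u^{2n-1}-u\bigr)\,\widetilde{\o}(u)\,du
\]
for radially symmetric twists $f_\o$. Since the right-hand side is a moment-type functional of $\widetilde\o$ with kernels $u^3-u,\,u^5-u,\dots,u^{2d+1}-u$, one can choose $d$ profiles $\o_1,\dots,\o_d$ with disjoint supports making the $d\times d$ matrix $\bigl(\overline{\Sign}_{2i+2}(f_{1,\o_j})\bigr)$ non-singular, and then take the $\overline{\Phi}_i$ to be the linear combinations of $\overline{\Sign}_4,\dots,\overline{\Sign}_{2d+2}$ dual to the flows. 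No localization is needed: all $d$ quasimorphisms see all $d$ flows, and independence is read off from the explicit integral formula. To complete your proof you would need either to supply such an explicit computation or to substantiate the localization claim; as written, the corollary's essential input is asserted rather than proved.
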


See Section \ref{Section: examples} for examples of such choices.

\subsection*{Acknowledgements.}
We thank Leonid Polterovich for illuminating discussions. We thank Lev Buhovsky, Yakov Eliashberg and Alexander Shnirel'man for useful conversations.
E.S. thanks Octav Cornea for his support and Steven Lu for the invitation to speak on a preliminary version of these results. Both authors thank CRM-ISM Montreal for a great research atmosphere. We thank the anonymous referee for bringing a mistake in the previous version of this manuscript to our attention.

\section{Proof.}


The proof consists of two steps: the topological bound and the analytical bounds, the details of which appear in subsequent sections.
For our purposes it is convenient to consider the generating set $S =\bigcup_{i,j}\{[\gamma_{ij}],[\gamma_{ij}]^{-1}\}$ in the spherical pure braid group $P_n(S^2) = \pi_1(X_n(S^2),m)$. We denote by $|[\gamma]|_S$ the corresponding word-length of an element $[\gamma]$ in $P_n(S^2)$.

%



Our goal is to estimate $|\Phi([\arr{\phi}])|$ for a path $\arr{\phi}=\{\phi_t\}$ in $\G$ starting at $Id$. We first estimate the integrand
$g(x) = r([l(x)])$. By the quasimorphism property, for $C_1 = \max_{\gamma \in S} |r(\gamma)| + \delta(r)$ (recall that $\delta(r)$ denotes the defect $\sup_{x,y \in \pi_1(X)}|r(xy)-r(x)-r(y)| < \infty$ of the homogeneous quasimorphism $r$) we get
\begin{equation}\label{Equation: estimating the integrand via word length}|g(x)|\leq C_1 \cdot |[l(x)]|_S.\end{equation}
Therefore it is enough to estimate
\[ \int_X d\nu(x) \; |[l(x)]|_S.\]
In order to do this we prove the following topological bound. We note that for all $x$ outside a subset of a measure zero subset in $X_n(M)$, the loop $l(x)$ lies in $X_n(U_0).$
\vspace{2mm}
\begin{lma}\label{Lemma: topological bound}
For a loop $l(x)$ in $X_n(U_0),$
\[|[l(x)]|_S \leq 2 A \sum_{i<j} \int_{\l(x)} |\theta_{ij}| + B,\]
for some constants $A>0, B>0$.
\end{lma}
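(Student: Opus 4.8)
The plan is to move the problem from the spherical pure braid group $P_n(S^2)$ into the planar one $P_n(\C)$, where the braid traced out by the loop $l(x)$ can be read off a generic linear projection, and to bound its number of crossings by the total variation of the pairwise angular coordinates $\arg(z_i-z_j)$ --- which is precisely $2\pi\int_{l(x)}|\theta_{ij}|$. First I would reduce to $P_n(\C)$: since $l(x)$ lies in $X_n(U_0)\cong X_n(\C)$ its class is defined there, and the homomorphism $P_n(\C)\to P_n(S^2)$ induced by $U_0\hookrightarrow S^2$ carries each planar generator $[\gamma_{ij}]$ to the element of $S$ of the same name, so the image of a shortest planar word for $[l(x)]$ is a word of the same length over $S$; it thus suffices to bound the word length of $[l(x)]$ in $P_n(\C)$ with respect to $\{[\gamma_{ij}]^{\pm1}\}$. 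I would also record that $l(x)$ is piecewise $C^1$ (a concatenation of minimal geodesics with a flow orbit), that a $C^1$-small perturbation does not change its homotopy class, and that it changes each $\int_{l(x)}|\theta_{ij}|$ continuously, the integrand $\operatorname{Im}\!\big((\dot z_i-\dot z_j)/(z_i-z_j)\big)$ depending continuously on the loop since $z_i-z_j$ stays away from $0$. Hence it is enough to prove the estimate, with constants depending only on $n$, for loops in general position, and then pass to the limit.

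Next, note that $\theta_{ij}$ is the pullback of $\tfrac1{2\pi}d\arg$ under $(z_1,\dots,z_n)\mapsto z_i-z_j$, so that, writing $\vartheta_{ij}(t)=\arg(z_i(t)-z_j(t))$ along $l(x)$, one has $\int_{l(x)}|\theta_{ij}|=\tfrac1{2\pi}\operatorname{Var}_{[0,1]}(\vartheta_{ij})$. For a direction $\psi\in\R/2\pi\Z$, project $\C$ orthogonally to the line through $0$ in direction $e^{i\psi}$; strands $i$ and $j$ form a crossing of the resulting braid diagram at a time $t$ exactly when $\vartheta_{ij}(t)\in\{\psi,\psi+\pi\}$. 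For generic $\psi$ this diagram is regular --- simple, transverse crossings at pairwise distinct times --- and its number of crossings is $N(\psi)=\sum_{i<j}M_{ij}(\psi)$, where $M_{ij}(\psi)=\#\{t:\vartheta_{ij}(t)\in\{\psi,\psi+\pi\}\}$. Applying the Banach indicatrix formula to each $\vartheta_{ij}$ gives
\[\int_0^{2\pi}N(\psi)\,d\psi=\sum_{i<j}2\operatorname{Var}_{[0,1]}(\vartheta_{ij})=4\pi\sum_{i<j}\int_{l(x)}|\theta_{ij}|,\]
so there is a generic $\psi_0$ for which $N(\psi_0)\le 2\sum_{i<j}\int_{l(x)}|\theta_{ij}|$.

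Fixing such a $\psi_0$, the regular diagram presents the image of $[l(x)]$ in the full braid group $B_n$ as a word of length $N(\psi_0)$ in the Artin generators $\sigma_1^{\pm1},\dots,\sigma_{n-1}^{\pm1}$, so $|[l(x)]|_{B_n}\le N(\psi_0)$. As $P_n(\C)$ has finite index $n!$ in $B_n$ it is undistorted: there is $C_n>0$, depending only on $n$, with $|\beta|_{\{[\gamma_{ij}]^{\pm1}\}}\le C_n|\beta|_{\{\sigma_k^{\pm1}\}}$ for every $\beta\in P_n(\C)$ --- Reidemeister--Schreier rewriting yields such a bound for a Schreier generating set, and any two finite generating sets of $P_n(\C)$ are Lipschitz equivalent. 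Combining this with the reduction above,
\[|[l(x)]|_S\le C_n\,N(\psi_0)\le 2C_n\sum_{i<j}\int_{l(x)}|\theta_{ij}|,\]
which is the claim with $A=C_n$ (and any positive $B$; none is actually needed here, though one is available to absorb the limit over perturbations of $l(x)$).

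The main obstacle is the crossing count: one must check that for almost every direction $\psi$ the braid diagram is genuinely regular with finitely many crossings, so that $N(\psi)<\infty$ and the indicatrix identity really selects a good $\psi_0$. For a piecewise $C^1$ loop this is a Sard-type statement --- the critical values of each $C^1$ map $\vartheta_{ij}\colon[0,1]\to S^1$, together with the finitely many directions at which two different pairs would cross simultaneously or three strands would align, form a negligible set --- and it is where the concrete geometry of loops assembled from geodesics and the Hamiltonian flow has to be kept in mind; alternatively one perturbs $l(x)$ into general position as arranged in the first paragraph. The remaining steps, in particular passing from the $B_n$-diagram to a word in $\{[\gamma_{ij}]^{\pm1}\}$ via finite index of $P_n(\C)$ in $B_n$, are routine.
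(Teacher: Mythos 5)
Your argument is correct and follows essentially the same route as the paper: the Banach indicatrix identity you use is exactly the paper's co-area computation $\int_{l(x)}|\theta_{ij}|=\int_{S^1}n_{ij}(\omega)\,dm(\omega)$, the crossing count of a generic projected diagram bounds $|[l(x)]|_{B_n(\C)}$, and the passage to $S$ goes through the finite index of $P_n(\C)$ in $B_n(\C)$ together with the length-nonincreasing surjection $P_n(\C)\to P_n(S^2)$. The only (immaterial) difference is that you select a single good direction $\psi_0$ by averaging, whereas the paper integrates the inequality $\sum_{i\neq j}n_{ij}(\omega)\ge|[l(x)]|_{B_n(\C)}$ over all $\omega$; the constants agree.
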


Hence, it is enough to estimate
\begin{equation}\label{Equation: expression l(x)}
\int_X d\nu \int_{l(x)} |\theta_{ij}|,
\end{equation}
the integrand being almost everywhere defined.

Note that
\[\int_{l(x)} |\theta_{ij}| = \int_{\gamma(x)}|\theta_{ij}| +  \int_{\gamma(y)}|\theta_{ij}| + \int_{\{\phi_t \cdot x \}} |\theta_{ij}|.\]
We show in Lemma \ref{Lemma: bound on angles, nice short paths} that for an appropriate choice of short paths we have $\int_{\gamma(q)}|\theta_{ij}| \leq C_3$ for all $q \in X$. This will reduce estimating the expression in Equation \ref{Equation: expression l(x)} to estimating \begin{equation}\label{Equation: expression phi_t cdot x via theta}
\int_X d\nu \int_{\{\phi_t \cdot x\}} |\theta_{ij}|.
\end{equation}
We note that $|\theta_{ij}| \leq |\alpha_{ij}|$, where in the right hand side we use the standard norm on complex numbers, and
hence it is sufficient to estimate
\begin{equation}\label{Equation: expression phi_t cdot x via alpha}
\int_X d\nu \int_{\{\phi_t \cdot x\}} |\alpha_{ij}|.
\end{equation}

To this end we have the following
\vspace{2mm}
\begin{lma}\label{Lemma: estimate integral of integrals over paths} For $p>2$ we have
\[\int_X d\nu \int_{\{\phi_t \cdot x\}} |\alpha_{ij}| \leq C_4 \cdot l_p(\arr{\phi})\hspace{1mm}.\]
\end{lma}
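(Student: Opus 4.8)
The plan is to peel off the $n-2$ irrelevant coordinates, reduce the remaining double integral to a single pointwise-in-time inequality on $S^2$, and then observe that this inequality is exactly where the hypothesis $p>2$ is used; throughout, $\asymp$ and $\lesssim$ denote (in)equality up to positive multiplicative constants depending only on $n$ and the fixed normalizations. Since $\alpha_{ij}=p_{ij}^*\alpha$ depends only on the $i$-th and $j$-th coordinates, and $\nu$ agrees up to a constant with the restriction to $X_n(M)$ of the $n$-fold product of $\mu$ (the big diagonal being null), integrating out the remaining $n-2$ coordinates costs a factor $\asymp\mu(M)^{n-2}$ and reduces the claim to
\[\int_{M\times M}\Big(\int_{\{(\phi_t(w_1),\phi_t(w_2))\}}|\alpha|\Big)\,d\mu(w_1)\,d\mu(w_2)\ \lesssim\ l_p(\arr{\phi}),\]
where $\alpha=\tfrac{1}{2\pi}\tfrac{d(a-z)}{a-z}$ on $\C^2\setminus D$. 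For $\mu\times\mu$-a.e.\ $(w_1,w_2)$ the orbit $t\mapsto(\phi_t(w_1),\phi_t(w_2))$ stays in $U_0\times U_0$, since the set of $w\in M$ with $\phi_t(w)=\infty$ for some $t\in[0,1]$ lies on the image of the measure-zero curve $t\mapsto\phi_t^{-1}(\infty)$; hence the inner integrand is almost everywhere defined. For such $(w_1,w_2)$, writing $z_k(t)=\phi_t(w_k)\in\C$ and letting $V_t$ denote the $\C$-valued chart expression of the generating vector field (so that $\tfrac{d}{dt}z_k(t)=V_t(z_k(t))$), one has
\[\int_{\{(\phi_t(w_1),\phi_t(w_2))\}}|\alpha|\ =\ \frac{1}{2\pi}\int_0^1\frac{|V_t(z_1(t))-V_t(z_2(t))|}{|z_1(t)-z_2(t)|}\,dt.\]

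\textbf{Step 2: Tonelli and change of variables.} The last integrand is nonnegative, so Tonelli's theorem moves $\int_0^1 dt$ to the outside; for each fixed $t$, the substitution $z_k=\phi_t(w_k)$ is the area-preserving diffeomorphism $\phi_t\times\phi_t$ of $M\times M$. Everything therefore reduces to the $t$-free inequality
\[\int_{M\times M}\frac{|V(z_1)-V(z_2)|}{|z_1-z_2|}\,d\mu(z_1)\,d\mu(z_2)\ \lesssim\ \Big(\int_M|X|^p\,d\mu\Big)^{1/p}\]
for an arbitrary area-preserving vector field $X$ on $S^2$ with chart expression $V$; integrating this in $t$ against $\|X_t\|_{L^p}$ reproduces $\lesssim l_p(\arr{\phi})$ and proves the lemma.

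\textbf{Step 3: the weighted singular integral, and where $p>2$ enters.} I would estimate $|V(z_1)-V(z_2)|\le|V(z_1)|+|V(z_2)|$ and pass to Riemannian data: the conformal factor of the Fubini--Study metric in the chart is $\asymp(1+|z|^2)^{-1}$, so $|V(z)|\asymp(1+|z|^2)|X(z)|$, while the chord-versus-geodesic distance comparison on $S^2$ gives $|z_1-z_2|\asymp d_g(z_1,z_2)(1+|z_1|^2)^{1/2}(1+|z_2|^2)^{1/2}$; here $|X(z)|$ is the Riemannian length and $d_g$ the Riemannian distance. Hence the $z_1$-term of the integrand is
\[\lesssim\ \frac{(1+|z_1|^2)^{1/2}}{(1+|z_2|^2)^{1/2}}\cdot\frac{|X(z_1)|}{d_g(z_1,z_2)}\ \le\ \frac{(1+|z_1|^2)^{1/2}\,|X(z_1)|}{d_g(z_1,z_2)},\]
and the $z_2$-term is symmetric. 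Integrating in $z_2$ first and using the elementary fact that $C_0:=\sup_{y\in S^2}\int_M d_g(y,\cdot)^{-1}\,d\mu<\infty$ (e.g.\ via $d_g\asymp$ the $\R^3$-chord distance and rotational symmetry of $S^2$), the $z_1$-contribution is $\lesssim\int_M(1+|z|^2)^{1/2}|X(z)|\,d\mu(z)$, and Hölder's inequality with $\tfrac{1}{p}+\tfrac{1}{q}=1$ gives
\[\int_M(1+|z|^2)^{1/2}|X(z)|\,d\mu(z)\ \le\ \|X\|_{L^p(M)}\cdot\big\|(1+|z|^2)^{1/2}\big\|_{L^q(M,\mu)}.\]
The last factor is finite because $\big\|(1+|z|^2)^{1/2}\big\|_{L^q(M,\mu)}^q\asymp\int_{\C}(1+|z|^2)^{q/2-2}\,dA(z)$, which converges exactly when $q<2$, i.e.\ exactly when $p>2$. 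With the symmetric $z_2$-bound this yields the inequality of Step 2 with a constant $C_4=C_4(n,p)$.

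\textbf{Main obstacle.} I expect Step 3 to be the hard part. The form $\alpha$ is tied to the affine chart, so it must be transported honestly onto the sphere: the conformal factor together with the chord/geodesic comparison force the weight $(1+|z|^2)^{\pm1/2}$, and one must then recognize that this weight lies in $L^q(S^2,\mu)$ if and only if $q<2$. This is precisely the place where the endpoint $p=2$ fails and $p>2$ is needed and sharp, parallel to the failure of the two-dimensional Sobolev inequality at $p=2$ recalled in the introduction. The remaining ingredients --- the a.e.-defined-ness of the integrand, the use of Tonelli, and the change of variables --- are routine.
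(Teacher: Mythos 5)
Your argument is correct and follows the same skeleton as the paper's proof: reduction to $n=2$, Tonelli plus the measure-preserving change of variables $\phi_t\times\phi_t$, the triangle inequality $|V(z_1)-V(z_2)|\le |V(z_1)|+|V(z_2)|$, conversion to spherical data via the conformal factor $(1+|z|^2)^{-1}$, and H\"older's inequality with the weight $(1+|a|^2)^{1/2}$, whose $L^q(\mu)$-norm is finite exactly when $q<2$, i.e.\ $p>2$. The one place you genuinely diverge is the key pointwise bound on the inner integral: the paper isolates this as Lemma \ref{Lemma: estimate an integral}, showing $\int_{\C}\frac{d\mu(z)}{|z-a|}\le C_*(1+|a|^2)^{-1/2}$ by a direct two-case computation (splitting at $|a|\le 1$ versus $|a|\ge 1$ and then decomposing the domain of integration near and away from the singularity). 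You instead obtain the same bound conceptually, from the chordal-distance identity $|z_1-z_2|\asymp d_g(z_1,z_2)(1+|z_1|^2)^{1/2}(1+|z_2|^2)^{1/2}$ together with the uniform finiteness of $\int_M d_g(y,\cdot)^{-1}\,d\mu$ on a compact surface. This is cleaner and makes transparent why the exponent $1/2$ appears (it is exactly the conformal weight left over after the geodesic denominator absorbs the singularity), at the cost of invoking the chord-versus-geodesic comparison; the paper's computation is more elementary and self-contained. Both routes land on the same weight and the same sharp threshold $q<2$.
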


Hence, combining estimate (\ref{Equation: estimating the integrand via word length}), Lemma \ref{Lemma: topological bound}, Lemma \ref{Lemma: estimate integral of integrals over paths} and Lemma \ref{Lemma: bound on angles, nice short paths} gives a proof of Theorem \ref{Theorem: estimate}.

\section{Topological bound}\label{Section: topological bound}

In this section we prove Lemma \ref{Lemma: topological bound}. The idea is to use a full-measure chart on $\C P^1$ diffeomorphic to $\C$, where the topological bounds can be applied.

We shall estimate the expression $|[l(x)]|_{S}$ via $\sum_{i<j} \int_{\l(x)} |\theta_{ij}|$. Indeed, consider the divisors $D^\infty_k$ that are defined by $z_k = x_\infty = [1,0]$, in homogenous coordinates on $\C P^1$. Denote by $D^\infty$ their union (sum). Consider the trace $D^\infty_{\arr{\phi}}=\bigcup_{t \in [0,1]} \phi_t \cdot (D^\infty)$. Since this can be considered as a smooth cycle with boundary whose dimension is one less that of $X$, its measure is zero. Denoting $U_0 := \C P^1 \setminus \{[1,0]\},$ consider the subset $X''=X\setminus D^\infty_{\arr{\phi}} \subset (U_0 \times ... \times U_0)\cap X \subset X$ of full measure. It is the maximal subset of $X$ with the property that $x \in X \setminus D^\infty$ if and only if $\phi_t \cdot x \in X\setminus D^\infty$ for all $t \in [0,1]$. Note that $(U_0 \times ... \times U_0)\cap X \cong X_n(\C)$ canonically, hence for the length of the current estimate we shall assume that all points of $S^2$ that are considered lie in $\C$. Firstly
\[ \int_X d\nu(x) \; |[l(x)]|_S = \int_{X''} d\nu(x) \; |[l(x)]|_S.\]
We now estimate the right hand side of this equality. Note that $\theta_{ij} = p_{ij}^* \theta$, and the latter takes the form $Im(\frac{d(a-z)}{a-z})$ in the chart $U_0 \times U_0$. Hence for $x \in X''$, we have $\int_{l(x)} |\theta_{ij}| = \int_{l(x)_{ij}} |\theta|$, where
$l(x)_{ij} = p_{ij}\circ l(x)$, and moreover the following equality holds by the co-area formula.

For almost all $\omega \in S^1$, the quantity
$$n_{ij}(\omega) = \#\{ t \in [0,1) | \frac{p_i\circ l(x)(t) - p_j \circ l(x)(t)}{|p_i\circ l(x)(t) - p_j \circ l(x)(t)|} = \om \in S^1\}$$
is finite (and well-defined). Moreover, \[\int_{l(x)} |\theta_{ij}| = \int_{S^1} n_{ij}(\omega) dm(\om)\]
for $m$ the Haar (Lebesgue) measure on $S^1$. Note that (cf. \cite{BrandenburskyLpMetrics, BrandenburskyKedra1}) $n_{ij}(\omega)$ is the number of times that the $i$-th strand overcrosses the $j$-th strand in the diagram of the braid $l(x)$ obtained by projection in the direction $\om$. From the $\om$-projection diagram of the braid $l(x)$ we get a presentation of $l(x)$ as a word in the full braid group $B_n(\C)$, generated by say the half-twists, that has exactly one generator for each overcrossing.
Hence
\[\sum_{i \neq j} n_{ij}(\omega) \geq |[l(x)]|_{B_n(\C)}.\]
Consequently
\[2 \int_{l(x)} \sum_{i<j} |\theta_{ij}| = \int_{S^1}\sum_{i \neq j} n_{ij}(\omega) dm(\om) \geq |[l(x)]|_{B_n(\C)}.\]
By standard geometric group theory (cf. \cite[Corollary 24]{TopicsGGT}), since the pure braid group $P_n(\C)$ is a subgroup of finite index in $B_n(\C)$,  we have
\[|\cdot|_{P_n(\C)} \leq A |\cdot|_{B_n(\C)} + B,\]
for some $A >0, B>0$.
Since the small loops around the diagonals whose classes generate the pure braid group $P_n(S^2)$ can be taken to be disjoint from the infinity divisor $D^\infty$, we see that this generating set is actually an image of a generating set of $P_n(\C)$ under the natural surjection $i_*$ of fundamental groups induced by the inclusion. Therefore
\[|i_*(\cdot)|_{P_n(S^2)} \leq |\cdot|_{P_n(\C)}\]
with respect to the chosen generating set. That is
\[|\gamma|_{P_n(S^2)} \leq \min_{\widetilde{\gamma} \in (i_*)^{-1}(\gamma)} |\widetilde{\gamma}|_{P_n(\C)}.\]
Combining the last three observations we have
\[|[l(x)]|_{P_n(S^2)} \leq 2A \int_{l(x)} \sum_{i<j} |\theta_{ij}| + B.\]
This proves the lemma.

\section{Analytical bounds}\label{Section: analytical bounds}

\subsection{Estimating middle term}

We first prove Lemma \ref{Lemma: estimate integral of integrals over paths}. We adapt an estimate of Gambaudo-Lagrange (c.f. \cite{BrandenburskyLpMetrics}), relying on a creative use of the H$\ddot{\text{o}}$lder inequality, taking into account the conversion factors between the spherical and the Euclidean metrics and volume forms.
\vspace{2mm}
\begin{pf}

Since $\alpha_{ij} = p_{ij}^* \alpha$, and since projections in Banach spaces have norm $1$, it is enough to show the estimate for the case $n=2$, namely $X = \C P^1 \times \C P^1 \setminus D$. For the path $\{\phi_t\}$ in $\G$ we have
\[\int_{X} d\nu(x) \int_{\{\phi_t \cdot x\}} |\alpha| =  \int_{X} d\nu(x) \int_0^1 |\alpha_{\phi_t \cdot x}(X^2_t(\phi_t \cdot x))|dt .\]
Here $X^2_t = X_t \oplus X_t$ is the time-dependent vector field coming from the diagonal action of $\G$ on $M^2$. Note that this action preserves the measure $\nu$. If necessary, we restrict it to $X$ without changing notation.
Therefore
\[ \int_{X} d\nu(x) \int_0^1 |\alpha_{\phi_t \cdot x}(X^2_t(\phi_t \cdot x))|dt = \int_0^1 dt \int_X d\nu(x) |\iota_{X^2_t}\alpha|,\]
by Tonelli's theorem. Of course by definition of the Lebesgue integral, we may integrate over $M^2$ extending the integrand by $+\infty$ to the diagonal.




To estimate this integral in terms of the $L^p$-length we proceed as follows.
Denote by $u_{00}$ the chart map
$$u_{00}= u_0 \times u_0: \C \times \C \; \xrightarrow{\sim} \; U_0 \times U_0 \subset \C P^1 \times \C P^1.$$


Note that $2\pi \alpha = \frac{d(a-z)}{a-z},$ and $u_{00}^* X^2_t = (A_t,Z_t)$ for vector fields
$A_t = u_0^* X_t ,Z_t = u_0^* X_t$ on $\C$. It is easy to see that via the affine trivialization of the tangent bundle to $\C \times \C$,  $A$ depends only on the coordinate $a$ and $Z$ only on $z$. Moreover, $u_{00}^* \nu = \mu \boxtimes \mu$, for $\mu$ the pullback by the standard affine chart of our chosen Riemannian measure $dV$ on $\C P^1$. Therefore
\[\int_0^1 dt \int_{X_2(U_0)} d\nu \; |\iota_{X^2_t}\alpha| = \int_0^1 dt \int_{\C \times \C \setminus \Delta} \frac{|A_t(a)- Z_t(z)|}{|z-a|} \;d\mu(a) \; d\mu(z). \]

We apply the triangle inequality $|A_t(a) - Z_t(z)| \leq |A_t(b)| + |Z_t(z)|$, and estimate the two resulting terms separately. Since the two summands are estimated analogously, we show the case of the first summand.

Recall the conversions for the pullback of the spherical measure
\[d\mu(\zeta) = 2 (1+|\zeta|^2)^{-2} \, dm(\zeta)\]
and for the pullback $|\cdot|_{Sph} = u_{0}^* |\cdot|_{S^2}$, of the metric on the sphere in our coordinate chart \[(|\cdot|_{Sph})_{\zeta} = (1+|\zeta|^2)^{-1} \, |\cdot|_{Eucl},\] where $m$ denotes the Lebesgue measure on $\C$ and $|-|_{Eucl}$ is the Euclidean metric.

We have
\[\int_0^1 dt \int_\C d{\mu}(a) \int_\C d{\mu}(z) \frac{|A_t(a)|}{|a-z|} = \int_0^1 dt \int_\C |A_t(a)|_{Sph}\; d{\mu}(a)  \int_\C d{\mu}(z) \frac{(1+|a|^2)}{|z-a|} \]

\[ \displaystyle{\leq \int_0^1 dt \;(\int_\C |A_t(a)|_{Sph}^p \; d{\mu}(a))^{\frac{1}{p}} \, \cdot \,(\int_\C  \, (\int_\C d{\mu}(z) \frac{(1+|a|^2)}{|z-a|})^q  \; d{\mu}(a))^{\frac{1}{q}},}\]
applying the H$\ddot{\text{o}}$lder inequality to the functions $\phi(a)=|A_t(a)|$ and $\psi(a)=(1+|a|^2) \int_\D d{\mu}(z) \frac{1}{|z-a|}$, and the measure ${\mu}.$


The integral $\psi$ takes the form \[\psi(a) = 2 (|a|^2+1) \int_{\C} \frac{1}{|z-a|}\frac{1}{(1+|z|^2)^2} dm(z).\] This expression obeys the following bound, the proof of which we defer to the end of this section.

\vspace{2mm}
\begin{lma}\label{Lemma: estimate an integral}
There exists a universal constant $C_*>0$ such that $\psi(a) \leq C_*(|a|^2+1)^{1/2}$.
\end{lma}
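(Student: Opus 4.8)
The plan is to absorb the prefactor and reduce everything to a single pointwise bound on
\[ I(a) := \int_\C \frac{1}{|z-a|}\,\frac{1}{(1+|z|^2)^2}\,dm(z), \]
since $\psi(a) = 2(|a|^2+1)\,I(a)$. It therefore suffices to prove $I(a) \le C\,(1+|a|^2)^{-1/2}$ for a universal $C$, and I will in fact establish the cleaner estimate $I(a) \le C\,(1+|a|)^{-1}$, from which the lemma follows with $C_* = 2\sqrt{2}\,C$ (using $(1+|a|^2)^{1/2}\le \sqrt2(1+|a|)$).

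First I would record a crude bound valid for all $a$: splitting the integral at $|z-a|=1$, using $(1+|z|^2)^{-2}\le 1$ on $\{|z-a|\le 1\}$ and $|z-a|^{-1}\le 1$ on $\{|z-a|\ge 1\}$, gives
\[ I(a) \le \int_{|z-a|\le 1}\frac{dm(z)}{|z-a|} + \int_\C \frac{dm(z)}{(1+|z|^2)^2} = 2\pi + \pi = 3\pi. \]
This already handles the range $|a|\le 2$, so the content of the lemma is the decay for large $|a|$.

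For $|a|\ge 2$ the decay must come from comparing the size of $z$ with that of $a$, so I would split $\C = \{|z|\le|a|/2\}\cup\{|z|\ge|a|/2\}$. On the first region $|z-a|\ge|a|/2$, so that part is at most $\tfrac{2}{|a|}\int_\C(1+|z|^2)^{-2}\,dm = 2\pi/|a|$. On the second region I split once more: on $\{|z-a|\le 1\}$ (which is contained in $\{|z|\ge|a|/2\}$ when $|a|\ge 2$) one has $(1+|z|^2)^{-2}\le(1+|a|^2/4)^{-2}$, contributing $O(|a|^{-4})$; on $\{|z|\ge|a|/2,\ |z-a|\ge 1\}$ one uses $|z-a|^{-1}\le 1$ together with $\int_{|z|\ge|a|/2}(1+|z|^2)^{-2}\,dm = \pi(1+|a|^2/4)^{-1} = O(|a|^{-2})$. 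Summing the three pieces, $I(a) = O(|a|^{-1})$ for $|a|\ge 2$, which combined with the crude bound yields $I(a)\le C(1+|a|)^{-1}$ and hence the lemma.

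The argument is entirely elementary; the one point that needs care — and the only place a naive approach fails — is that $\int_\C|z-a|^{-1}\,dm(z)$ diverges at infinity, so one cannot globally decouple the $1/|z-a|$ singularity from the decaying weight and still see any decay in $a$. The two-scale decomposition ($|z|$ versus $|a|/2$), rather than a decomposition around the singularity alone, is exactly what produces the extra power of $|a|$ in the denominator; no cancellation or sharp analysis is involved.
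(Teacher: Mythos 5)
Your proof is correct and follows essentially the same approach as the paper: a two-scale decomposition of $\C$ at radius $|a|/2$ for large $|a|$, using that the weight $(1+|z|^2)^{-2}$ is $O(|a|^{-4})$ near the singularity while $|z-a|^{-1}$ is $O(|a|^{-1})$ away from it, plus a uniform crude bound for small $|a|$. The only differences (splitting around the origin rather than around $a$, an extra sub-split at $|z-a|=1$) are cosmetic.
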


Consequently
\[ \int_0^1 dt \int_\C d{\mu}(a) \int_\C d{\mu}(z) \frac{|A_t(a)|}{|z-a|} \leq C_* \cdot \displaystyle{\int_0^1 dt \;(\int_\C |A_t(a)|_{Sph}^p \; d{\mu}(a))^{\frac{1}{p}} (\int_{\C} (1+|a|^2)^{q/2} d\mu(a))^{\frac{1}{q}}.}\]

The integral \[\int_{\C} (1+|a|^2)^{q/2} d\mu(a) = 2\int_{\C} (1+|a|^2)^{(q-4)/2} dm(a) = C_p\] converges if and only if $q<2$, or equivalently $p>2$. We note that this covers the case $p=\infty$.

Therefore, for $p>2$ we obtain the bound

\begin{equation}\label{Equation: bound phi_t cdot x part via l_p finite}
\int_0^1 dt \int_{X} d\nu \; |\iota_{X^2_t}\alpha| \leq C \cdot l_p(\{\phi_t\}),
\end{equation}

for $C = 2 C_* \cdot C_p.$
%
%

This finishes the proof. It remains to prove Lemma \ref{Lemma: estimate an integral}.

\vspace{2mm}

\begin{pf}(Lemma \ref{Lemma: estimate an integral})

We will prove the equivalent statement that \[\psi_0(a) =\int_{\C} \frac{1}{|z-a|}\frac{1}{(1+|z|^2)^2} dm(z) \leq \frac{C_*}{(|a|^2+1)^{1/2}},\] for a universal constant $C_*$.

Note that this statement is equivalent to the existence of two universal constants $C_1>0$ and $C_2>0,$ such that \[\psi_0(a)\leq C_1 \; \textrm{for} \; |a| \leq 1,\] and \[\psi_0(a) \leq \frac{C_2}{|a|} \; \textrm{for} \; |a| \geq 1.\]

To prove the first statement assume $|a| \leq 1.$ Write the integral $\psi_0(a)$ as the sum of the integrals over the measure-disjoint subsets $\{|z| \leq 2\}$ and $\{|z| \geq 2 \}$ of $\C.$ Then we estimate

\[\int_{|z| \leq 2} \frac{1}{|z-a|}\frac{1}{(1+|z|^2)^2} dm(z) \leq \int_{|z| \leq 2} \frac{1}{|z-a|} dm(z) \leq \]

\[ \leq \int_{|z-a| \leq 3} \frac{1}{|z-a|} dm(z-a) = 6\cdot \pi,\]

and

\[\int_{|z| \geq 2} \frac{1}{|z-a|}\frac{1}{(1+|z|^2)^2} dm(z) \leq \int_{|z| \geq 2} \frac{1}{|z|-|a|}\frac{1}{(1+|z|^2)^2} dm(z) \leq \]

\[ \leq \int_{|z| \geq 2} \frac{2}{|z|}\frac{1}{(1+|z|^2)^2} dm(z) = C'_{1} < \infty. \]
This gives us $\psi_0(a) \leq C_1,$ for $C_1 = 6\cdot \pi + C'_1$.

To prove the second statement assume $|a| \geq 1.$  Write the integral $\psi_0(a)$ as the sum of the integrals over the measure-disjoint subsets
$\{|z - a| \leq \frac{|a|}{2}\}$ and $\{|z-a| \geq \frac{|a|}{2} \}$ of $\C$. Note that if $|z - a| \leq \frac{|a|}{2}$,
then $|z| \geq \frac{|a|}{2}.$ Hence we estimate

\[\int_{|z - a| \leq \frac{|a|}{2}} \frac{1}{|z-a|}\frac{1}{(1+|z|^2)^2} dm(z) \leq \int_{|z - a| \leq \frac{|a|}{2}} \frac{1}{|z-a|} \frac{1}{(1+\frac{1}{4}|a|^2)^2} dm(z) = \]
\[ =  \pi \frac{ |a|}{(1+\frac{1}{4}|a|^2)^2 } \leq  \frac{C'_{2}}{|a|^3}\]
and
\[\int_{|z - a| \geq \frac{|a|}{2}} \frac{1}{|z-a|}\frac{1}{(1+|z|^2)^2} dm(z) \leq \int_{|z - a| \leq \frac{|a|}{2}} \frac{2}{|a|}\frac{1}{(1+|z|^2)^2} dm(z) \leq\]
\[ \leq \int_\C \frac{2}{|a|}\frac{1}{(1+|z|^2)^2} dm(z) = \frac{2\pi}{|a|}.\]
This gives us $\psi_0(a) \leq \frac{C_2}{|a|},$ for $C_2 =C'_2 + 2 \pi$.
This finishes the proof of the lemma.
\end{pf}
\end{pf}

\subsection{Estimating the short path terms}

We now describe the nice choice of short paths.

Recall that the affine chart $U_0$ is given by $u_0: \C \xrightarrow{\sim} U_0 = \C P^1 \setminus \{[1,0]\} \subset \C P^1, z \mapsto [z,1]$.
Consider the subset $X_n(U_0)$ in $X_n(\C P^1)$ of full measure. Choose pairwise distinct base-points
$m_k \in \C$, $1\leq k \leq n$, for example $m_k = \varepsilon \cdot e^{i k \cdot 2\pi/n}$, where $0<\varepsilon$ is small. These points combine to a base-point $m \in X$.

For a point $y=(y_1,...,y_n) \in X_1$, we have in particular that $y_j \in \C$, $1\leq j \leq n$. For each $1\leq j \leq n$ consider the linear path \[\{m_j + t \cdot (y_j - m_j)\}_{t \in [0,1]}\] between $m_j$ and $y_j$ in $\C$. Combine these paths to obtain a path $\gamma(y)$ between $m$ and $y$ in $M^n$. Investigating the $y$ for which the path $\gamma(y)$ is not contained in $X$, we obtain a union $N$ of open subsets in submanifolds (with corners) of codimension $1$. Thus we obtain a continuous system of paths as required on the subset $X' = X_n(U_0) \setminus N$ of full measure. Extend it measurably to $X$. This choice of paths allows us to prove the following.
\vspace{2mm}
\begin{lma}\label{Lemma: bound on angles, nice short paths}
For the nice choice of paths, we have $\int_{\gamma(y)}|\theta_{ij}| \leq \pi$ for all $y \in X'$.
\end{lma}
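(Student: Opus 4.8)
The plan is to reduce the estimate of $\int_{\gamma(y)}|\theta_{ij}|$ to a statement purely about two linear paths in the plane $\mathbb{C}$, and then to a single planar computation. Recall that $\theta_{ij}=p_{ij}^*\theta$, where in the chart $U_0\times U_0$ the form $\theta$ is $\mathrm{Im}\!\left(\frac{d(a-z)}{2\pi(a-z)}\right)$, i.e.\ $\frac{1}{2\pi}$ times the differential of the argument of $a-z$. Since $\gamma(y)$ is the coordinate-wise combination of the linear paths $\{m_k+t(y_k-m_k)\}$ and $p_{ij}$ just records the $i$-th and $j$-th coordinates, we have $\int_{\gamma(y)}|\theta_{ij}|=\int_0^1\bigl|\tfrac{1}{2\pi}\tfrac{d}{dt}\arg\bigl(w(t)\bigr)\bigr|\,dt$, where $w(t)=a(t)-z(t)$ with $a(t)=m_i+t(y_i-m_i)$ and $z(t)=m_j+t(y_j-m_j)$. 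But $w(t)$ is itself an affine (linear) path in $\mathbb{C}$: $w(t)=w(0)+t\bigl(w(1)-w(0)\bigr)$, where $w(0)=m_i-m_j\neq 0$ and $w(1)=y_i-y_j\neq 0$ (the latter since $y\in X$). Crucially, the condition that $\gamma(y)\subset X$, i.e.\ $y_i(t)\neq y_j(t)$ for all $t$, means exactly that the segment $w([0,1])$ does not pass through the origin.

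Thus the lemma reduces to the following elementary fact: for a straight-line segment in $\mathbb{C}\setminus\{0\}$ with endpoints $w_0,w_1$, the total variation of $\arg w(t)$ along the segment is at most $\pi$, with equality approached only in degenerate limits. This is geometrically obvious: as $t$ runs over $[0,1]$, the point $w(t)$ traverses a line segment avoiding $0$, so the ray from $0$ through $w(t)$ sweeps out an angle that is monotone in $t$ (the argument along an affine path avoiding the origin is a monotone function of the parameter, since $\frac{d}{dt}\arg w(t)=\mathrm{Im}\bigl(\dot w(t)/w(t)\bigr)=\mathrm{Im}\bigl((w_1-w_0)/w(t)\bigr)$ and one checks the sign of this quantity does not change — it equals $\mathrm{Im}(w_1-w_0)\cdot\overline{w(t)}$ up to a positive factor, and $\mathrm{Im}((w_1-w_0)\overline{w(t)})$ is an affine function of $t$ that cannot change sign without $w(t)$ crossing the line through $0$ and $w_1-w_0$... ). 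Hence $\int_0^1|\tfrac{d}{dt}\arg w(t)|\,dt=|\arg w(1)-\arg w(0)|$ for a suitable continuous branch, and since the segment avoids $0$ this total turning is strictly less than $\pi$. Multiplying by $\frac{1}{2\pi}$ — wait: note $|\theta|$ here is $\frac{1}{2\pi}$ times $|d\arg|$, so actually $\int_{\gamma(y)}|\theta_{ij}|\le \frac{1}{2\pi}\cdot\pi$; the stated bound $\pi$ is then immediate with room to spare (the paper's normalization of $\alpha$ carries the $\tfrac{1}{2\pi}$, so the clean bound $\pi$ absorbs any discrepancy in how one accounts the constant).

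Concretely, the steps I would carry out are: (1) write $\int_{\gamma(y)}|\theta_{ij}|$ as the integral of $|d\arg(a(t)-z(t))|$ over $[0,1]$, up to the normalization constant, using the explicit form of $\theta$ in the affine chart; (2) observe $a(t)-z(t)$ is an affine path in $\mathbb{C}$; (3) observe the hypothesis $\gamma(y)\in X'\subset X$ forces this affine path to avoid $0$; (4) prove the planar lemma that the argument along an origin-avoiding segment is monotone and its total variation is bounded by $\pi$, by computing $\frac{d}{dt}\arg w(t)$ and checking its sign is constant; (5) conclude. The main obstacle — really the only non-bookkeeping point — is step (4): making rigorous that the winding/turning of an affine segment about an exterior point is monotone and bounded by $\pi$. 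This is standard but deserves a careful sign analysis of $\mathrm{Im}\bigl((w_1-w_0)\overline{w(t)}\bigr)$ as an affine function of $t$; once it is established that this expression does not change sign on $[0,1]$ (which follows precisely because the segment does not cross the line spanned by $w_1-w_0$ through the origin, and if it did cross that line it would have to cross $0$ itself, contradicting the hypothesis — actually one must treat the case where the segment meets that line away from $0$, where the derivative vanishes but does not change sign), the rest is immediate.
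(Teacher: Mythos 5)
Your proposal is correct and follows the same route as the paper: reduce $\int_{\gamma(y)}|\theta_{ij}|$ to the total variation of the argument of the affine path $w(t)=a_t-z_t$, which avoids the origin precisely because $y\in X'$, and bound that variation by $\pi$ (the paper simply declares this last step ``immediate''). The only remark is that your step (4) is even easier than you anticipate: since $w(t)=w_0+t(w_1-w_0)$, the quantity $\mathrm{Im}\bigl((w_1-w_0)\overline{w(t)}\bigr)=\mathrm{Im}\bigl((w_1-w_0)\overline{w_0}\bigr)$ is \emph{constant} in $t$, so the sign analysis and the degenerate cases you worry about do not arise.
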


\begin{pf}
Indeed $\theta_{ij} = p_{ij}^* \theta$, for $\theta = Im(\alpha)$ in the chart $U_0 \times U_0$ given by $\alpha = \frac{d(a-z)}{a-z}$. Hence $\int_{\gamma(y)}|\theta_{ij}|$ equals the total variation of angle of the linear path $\{ a_t - z_t \}_{t \in [0,1]}$ in $\C$ (for $a_t, z_t$ corresponding to $\gamma(y)(t)_i, \gamma(y)(t)_j$ in the given chart), that does not pass through the origin. The bound is now immediate.
\end{pf}

\section{Examples of quasimorphisms and bi-Lipschitz embeddings of vector spaces}\label{Section: examples}

For $\alpha\in P_n = P_n(\C)$ we denote by $\widehat{\alpha}$ the $n$-component link which is a closure of $\alpha$, see Figure \ref{fig:braid-closure1}.
\begin{figure}[htb]
\centerline{\includegraphics[height=1.5in]{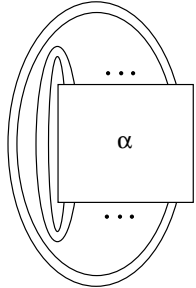}}
\caption{\label{fig:braid-closure1} Closure $\widehat{\alpha}$ of a braid $\alpha$.}
\end{figure}

Let $\sign_n\colon P_n\to \mathbb{Z}$ be a map such that $\sign_n(\alpha)=\sign(\widehat{\alpha})$, where $\sign$ is a signature invariant of links in $\mathbb{R}^3$. Gambaudo-Ghys \cite{GambaudoGhysCommutators} and the first named author \cite{BrandenburskyKnots} showed that $\sign_n$ defines a quasimorphism on $P_n$. We denote by $\overline{\sign}_n\colon P_n\to \mathbb{R}$ the induced homogeneous quasimorphism. Recall that the center of $P_n$ is isomorphic to $\mathbb{Z}$. Let $\Delta_n$ be a generator of the center of $P_n$. It is a well known fact that $P_n(S^2)$ is isomorphic to the quotient of $P_{n-1}$ by the cyclic group $\langle\Delta_{n-1}^2\rangle$, see \cite{Bir}. Let $\lk_n\colon P_n\to \mathbb{Z}$ be a restriction to $P_n$ of a canonical homomorphism from $B_n=B_n(\C)$ to $\mathbb{Z}$ which takes value $1$ on each Artin generator of $B_n$. Let $s_{n-1}\colon P_{n-1}\to \mathbb{R}$ be a homogeneous quasimorphism defined by
$$s_{n-1}(\alpha):=\overline{\sign}_{n-1}(\alpha)-\frac{\overline{\sign}_{n-1}(\Delta_{n-1})}{\lk_{n-1}(\Delta_{n-1})}\lk_{n-1}(\alpha).$$
Since $s_{n-1}(\Delta_{n-1})=0$, the homogeneous quasimorphism $s_{n-1}$ descends to a homogeneous quasimorphism
$\overline{s}_n\colon P_n(S^2)\to \mathbb{R}$. Note that $\overline{s}_2$ and $\overline{s}_3$ are trivial because $P_2(S^2)$ and $P_3(S^2)$ are finite groups.

For each $n\geq 4$ let
$$\overline{\Sign}_n\colon \Diff(S^2,dV)\to \mathbb{R}$$
be the induced homogeneous quasimorphism.
In \cite[Section 5.3]{GambaudoGhysCommutators} Gambaudo-Ghys evaluated quasimorphisms $\overline{\Sign}_{2n}$ on a family of diffeomorphisms $$f_\o\colon S^2\to S^2,$$
such that $f_\o(\infty)=\infty$ and $f_\o(x)=e^{2i\pi\o(|x|)}x$, here $S^2$ is identified with $\mathbb{C\cup\{\infty\}}$, and
$\o\colon \mathbb{R}_+\to\mathbb{R}$ is a function which is constant in a neighborhood of $0$ and outside
some compact set. Let $a(r)$ be the spherical area (with the normalization $\vol(\C) = 1$) of the disc in $\mathbb{C}$ with radius $r$ centered at $0$. Set $u=1-2a(r)$ and let $\widetilde{\o}(u)=\o(r)$. In \cite[Lemma 5.3]{GambaudoGhysCommutators} Gambaudo-Ghys showed that for each $n\geq 2$
\begin{equation}\label{eq:GG-sign-comp}
\overline{\Sign}_{2n}(f_\o)=\frac{n}{2}\int\limits_{-1}^1(u^{2n-1}-u)\widetilde{\o}(u)du.
\end{equation}


\begin{proof}[Proof of Corollary \ref{Corollary: bi-Lipschitz lattices}]
Let $H_\o\colon S^2\to \mathbb{R}$ be a smooth function supported away from the $\{\infty\}$ point and $f_{t,\o}$ be a Hamiltonian flow generated by $H_\o$, such that $f_{1,\o}=f_\o$. Since $f_{t,\o}$ is an autonomous flow, by \eqref{eq:GG-sign-comp} we have
$$
\overline{\Sign}_{2n}(f_{t,\o})=t\frac{n}{2}\int\limits_{-1}^1(u^{2n-1}-u)\widetilde{\o}(u)du.
$$

Let $d\in \mathbb{N}$. It follows from \eqref{eq:GG-sign-comp} that it is straight forward to construct a family of functions $\o_i\colon \mathbb{R}_+\to\mathbb{R}$ and $\{H_{\o_i}\}_{i=1}^d$ supported away from the $\{\infty\}$ point such that
\begin{itemize}
\item
Each Hamiltonian flow $f_{t,\o_i}$ is generated by $H_{\o_i}$ and $f_{1,\o_i}=f_{\o_i}$.
\item
The functions $\{H_{\o_i}\}_{i=1}^d$ have disjoint support and hence the diffeomorphisms $f_{t,\o_i}$ and $f_{s,\o_j}$ commute for all $s,t\in \mathbb{R}$, $1\leq i,j\leq n$.
\item
The $(d\times d)$ matrix
$\left(
              \begin{array}{ccc}
                \overline{\Sign}_{4}(f_{1,\o_1}) & \cdots & \overline{\Sign}_{4}(f_{1,\o_d}) \\
                \vdots & \vdots & \vdots \\
                \overline{\Sign}_{2d+2}(f_{1,\o_1}) & \cdots & \overline{\Sign}_{2d+2}(f_{1,\o_d}) \\
              \end{array}
            \right)
$
is non-singular.
\end{itemize}

It follows that there exists a family $\{\overline{\Phi}_i\}_{i=1}^d$ of homogeneous quasimorphisms on $\Diff(S^2,dV)$, such that $\overline{\Phi}_i$ is a linear combination of $\overline{\Sign}_{4},\ldots,\overline{\Sign}_{2d+2}$ and
\begin{equation}\label{eq:sign-basis}
\overline{\Phi}_i(f_{t,\o_j})=\left\{
                                          \begin{array}{c}\begin{aligned}
                                            &t  &\rm{if}&\quad i=j\\
                                            &0  &\rm{if}&\quad i\neq j\\
                                            \end{aligned}
                                          \end{array}
                                        \right ..
\end{equation}

Let $I\colon\mathbb{R}^d\to \Diff(S^2,dV)$ be a map, such that
$$I(v):=f_{v_1,\o_1}\circ\ldots\circ f_{v_d,\o_d}$$
and $v=(v_1,\ldots,v_d)$. It follows from the construction of $\{f_{v_i,\o_i}\}_{i=1}^d$ that $I$ is a monomorphism. Let $A':=\max\limits_i{\|f_{1,\o_i}\|_p}$, where $\|\cdot\|_p$ denotes the $L^p$-norm, then
$$\|f_{v_1,\o_1}\circ\ldots\circ f_{v_d,\o_d}\|_p\leq A'\|v\|,$$
where $\|v\|=\sum\limits_{i=1}^d |v_i|$.

All diffeomorphisms $f_{v_1,\o_1},\ldots,f_{v_d,\o_d}$ pair-wise commute. Hence, for each $1\leq i\leq d$, by our main Theorem and \eqref{eq:sign-basis} we have
$$\|f_{v_1,\o_1}\circ\ldots\circ f_{v_d,\o_d}\|_p\geq A^{-1}\left|\overline{\Phi}_i(f_{v_1,\o_1}\circ\ldots\circ f_{v_d,\o_d})\right|=
A^{-1}\cdot |v_i|\left|\overline{\Phi}_i(f_{1,\o_i})\right|,$$
where $A$ is the maximum over the Lipschitz constants (in our main theorem) of the functions
$$\overline{\Phi}_i\colon\Diff(S^2,dV)\to \mathbb{R}.$$
It follows that
$$\|f_{v_1,\o_1}\circ\ldots\circ f_{v_d,\o_d}\|_p\geq \left((d\cdot A)^{-1}\min_i\left|\overline{\Phi}_i(f_{1,\o_i})\right|\right) \|v\|=
\left((d\cdot A)^{-1}\right) \|v\|,$$
and the proof follows.
\end{proof}

\appendix
\section{Integrability of the quasimorphisms}\label{Appendix: Integrability}

For every pair of points $x,y\in S^2$ let us choose a minimal geodesic path $s_{xy}\colon[0,1]\to S^2$ from $x$ to $y$.
Let $f_t\in\Diff(S^2,dV)$ be an isotopy from the identity to $f\in\Diff(S^2,dV)$
and let $m\in S^2$ be a basepoint. For $y\in S^2$ we define a loop
$\gamma_{y}\colon [0,1]\to S^2$ by
\begin{equation}\label{eq:gamma-for-qm}
\gamma_{y}(t):=
\begin{cases}
s_{my}(3t) &\text{ for } t\in \left [0,\frac13\right ]\\
f_{3t-1}(y) &\text{ for } t\in \left [\frac13,\frac23\right ]\\
s_{f(y)m}(3t-2) & \text{ for } t\in \left [\frac23,1\right ].
\end{cases}
\end{equation}

Let $X_n(S^2)$ be the configuration space of all ordered $n$-tuples
of pairwise distinct points in the surface $S^2$. It's fundamental group
$\pi_1(X_n(S^2))$ is identified with the pure braid group $P_n(S^2)$.
Fix a base-point $m=(m_1,\ldots,m_n)$ in $X_n(S^2)$. For almost each $x=(x_1,\ldots,x_n)\in X_n(S^2)$ the $n$-tuple of loops $(\gamma_{x_1},\ldots,\gamma_{x_n})$ is
a based loop in the configuration space $X_n(S^2)$.
Let $\gamma(f,x)\in P_n(S^2)$
be an element represented by this loop.

Let $r\colon P_n(S^2)\to \mathbb{R}$ be a homogeneous quasimorphism. Note that since $\pi_1(\Diff(S^2,dV))\cong\mathbb{Z}/2\mathbb{Z}$, the value $r(\gamma(f;{x}))$ does not depend on the isotopy $f_t$. Define $\Phi\colon \Diff(S^2,dV)\to \mathbb{R}$ by
\begin{equation}\label{eq:GG-ext}
\Phi(f):=
\int\limits_{X_n(S^2)}r(\gamma(f;{x}))d{x}\qquad\qquad
\overline{\Phi}(f):=\lim_{k\to +\infty}\frac{\Phi(f^k)}{k}
\thinspace .
\end{equation}

\begin{lma}
The function $\overline{\Phi}\colon\Diff(S^2,dV)\to \mathbb{R}$ is a well defined homogeneous quasimorphism.
\end{lma}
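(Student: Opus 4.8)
The plan is to show two things: first that $\Phi$ is a well-defined real-valued function on $\Diff(S^2,dV)$ (i.e.\ that the integrand $x \mapsto r(\gamma(f;x))$ is measurable and absolutely integrable over $X_n(S^2)$, which has finite total measure), and second that $\Phi$ is a quasimorphism, so that the limit defining $\overline{\Phi}$ exists and yields a homogeneous quasimorphism. Measurability of $x \mapsto \gamma(f;x) \in P_n(S^2)$ follows from the continuity of the system of short paths $s_{xy}$ off a negligible set together with continuity of the flow $f_t$, so that $x \mapsto r(\gamma(f;x))$ is measurable as a composite; then we only need integrability.

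For integrability, I would bound $|r(\gamma(f;x))|$ pointwise by $C_1 \cdot |\gamma(f;x)|_S$ using the quasimorphism property of $r$ (exactly as in \eqref{Equation: estimating the integrand via word length}), reducing matters to showing $\int_{X_n(S^2)} |\gamma(f;x)|_S \, dx < \infty$. Here I would invoke the topological bound of Lemma \ref{Lemma: topological bound}, which controls $|\gamma(f;x)|_S$ by $2A\sum_{i<j}\int_{\gamma(f;x)}|\theta_{ij}| + B$, and then the estimates already established in the proof of Theorem \ref{Theorem: estimate}: the short-path contributions $\int_{s_{mx_i}}|\theta_{ij}|$ and $\int_{s_{f(x_j)m}}|\theta_{ij}|$ are uniformly bounded (this is the content of Lemma \ref{Lemma: bound on angles, nice short paths}, adapted to the minimal-geodesic short paths used here, or directly by the remark that the two systems of short paths differ by a bounded system of loops), while the middle contribution $\int_X dx \int_{\{f_t \cdot x\}}|\alpha_{ij}|$ is finite by Lemma \ref{Lemma: estimate integral of integrals over paths} applied to any smooth isotopy $f_t$ from $Id$ to $f$ (recall $l_p(\{f_t\}) < \infty$ for any fixed smooth path). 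Since $X_n(S^2)$ has finite $\nu$-measure and $P_n(S^2)$ is finite index in $B_n$, assembling these bounds gives the required $L^1$ estimate.

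The quasimorphism property is then the standard Gambaudo--Ghys argument: for $f, h \in \Diff(S^2,dV)$ and a point $x$, the loops $\gamma(fh;x)$, $\gamma(f; h\cdot x)$, and $\gamma(h;x)$ differ, after concatenation, by the short path connecting $h \cdot x$ back to the basepoint and out again, whose class in $P_n(S^2)$ is bounded (uniformly in $x$) since $\int_{s}|\theta_{ij}|$ is uniformly bounded. Hence $|r(\gamma(fh;x)) - r(\gamma(f;h\cdot x)) - r(\gamma(h;x))| \leq D$ for a constant $D$ independent of $x$; integrating over $X_n(S^2)$ and using that the diagonal $\Diff(S^2,dV)$-action preserves $\nu$ (so that $\int_X r(\gamma(f;h\cdot x))\,dx = \int_X r(\gamma(f;x))\,dx = \Phi(f)$) shows $|\Phi(fh) - \Phi(f) - \Phi(h)| \leq D \cdot \nu(X_n(S^2))$. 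The independence of $r(\gamma(f;x))$ on the choice of isotopy (noted in the text, using $\pi_1(\Diff(S^2,dV)) = \Z/2\Z$ and vanishing of $r$ on torsion) makes $\Phi$ well-defined on the group rather than its universal cover. Finally, a bounded function $\Phi$ with the above defect has a well-defined homogenization $\overline{\Phi}(f) = \lim_k \Phi(f^k)/k$, which is a homogeneous quasimorphism by the general fact recalled in the Preliminaries.

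The main obstacle is the integrability step: one must verify that the purely topological quantity $|\gamma(f;x)|_S$ is $L^1$ over all of $X_n(S^2)$, and the cleanest route is precisely to feed it through the geometric estimates of Lemmas \ref{Lemma: topological bound}--\ref{Lemma: bound on angles, nice short paths}. One subtlety to handle carefully is that the short paths used in this appendix are minimal geodesics on $S^2$ (hence coordinate-wise geodesics in $(S^2)^n$), rather than the Euclidean-chart linear paths of Section \ref{Section: analytical bounds}; one checks that these differ by a system of loops with uniformly bounded $\int |\theta_{ij}|$, so the same finiteness conclusion holds.
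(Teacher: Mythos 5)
Your argument is correct in outline, but it takes a genuinely different route from the paper's appendix. You derive integrability from the machinery of the main theorem: the pointwise bound $|r(\gamma(f;x))|\leq C_1|\gamma(f;x)|_S$, the topological bound of Lemma \ref{Lemma: topological bound}, and then the analytic estimates of Lemmas \ref{Lemma: estimate integral of integrals over paths} and \ref{Lemma: bound on angles, nice short paths} applied to any fixed smooth isotopy (for which $l_p<\infty$). This is legitimate, and indeed the paper itself remarks in Section 1.3 that integrability ``follows as a by-product of the proof of our main theorem'' via Tonelli. The appendix, however, deliberately gives an \emph{a priori} argument that is independent of Sections \ref{Section: topological bound}--\ref{Section: analytical bounds}: it fragments $f=f_1\circ\dots\circ f_d$ into diffeomorphisms supported in discs $D_i$ using Banyaga's theorem \cite{BanyagaStructure}, stratifies $X_n(S^2)$ according to which points lie in $D_i$, passes from $P_n(S^2)$ to the planar pure braid group $P_n$ (a braid supported in a disc lifts, and word length can only increase under the quotient $P_{n-1}\twoheadrightarrow P_n(S^2)$), and invokes the known integrability of the word-length function over configuration spaces of the disc \cite[Lemma 4.1]{BrandenburskyKnots}; the quasimorphism property then bounds $|\Phi(f)|$ by the sum of the $|\Phi(f_i)|$ plus $d\cdot\delta(r)\cdot\vol(X_n(S^2))$. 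What each approach buys: yours is shorter given the rest of the paper but makes the appendix logically dependent on the $p>2$ analytic estimates; the paper's is self-contained, purely measure-theoretic/topological, and independent of any metric exponent. Two points in your write-up deserve care. First, the subtlety you flag about the short-path systems is real: Lemma \ref{Lemma: bound on angles, nice short paths} is proved only for the chart-linear paths, and the claim that minimal geodesics differ from these by a uniformly bounded system of loops is asserted but not proved in the paper; your proof inherits this debt, whereas the appendix's argument avoids it entirely by never invoking the $\theta_{ij}$. Second, in the quasimorphism step the loop $\gamma(g'h';x)$ is homotopic in $X_n(S^2)$ to the concatenation $\gamma(h';x)\#\gamma(g';h'(x))$ on the nose (the inserted short path to the basepoint and back is null-homotopic), so the pointwise defect is exactly $\delta(r)$ and no extra constant $D$ from short paths is needed; also ``a bounded function $\Phi$'' should read ``a quasimorphism $\Phi$'' for the homogenization to be defined.
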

\begin{proof}

It was proved by Banyaga that there exists $d\in\mathbb{N}$ and a family $\{f_i\}_{i=1}^d$ of diffeomorphisms in the group $\Diff(S^2,dV)$, such that each $f_i$ is supported in some disc $D_i\subset S^2$ and $f=f_1\circ\ldots\circ f_d$, see \cite{BanyagaStructure}. For each
$1\leq i\leq d$ pick an isotopy $\{f_{t,i}\}$ in $\Diff(S^2,dV)$ between the identity and $f_i$, such that the support of $\{f_{t,i}\}$ lies in the same disc $D_i$. First we are going to show that $|\Phi(f_i)|<\infty$.

Let $X_n(D_i)$ be the configuration space of all ordered $n$-tuples
of pairwise distinct points in the disc $D_i\subset S^2$.
Since changing a basepoint in $X_n(S^2)$ changes $\Phi$ by a bounded value, we can assume that the basepoint $m$ lies in $X_n(D_i)$. It follows that
\begin{eqnarray*}
\Phi(f_i)&=&\int\limits_{X_n(S^2)}r(\gamma(f_i;{x}))\,d{x}\\
&=&\sum_{j=1}^n\dbinom{n}{j}\vol(X_{n-j}(S^2\setminus D_i))\int\limits_{X_j(D_i)}r(\gamma(f_i;{x}))\,d{x}.
\end{eqnarray*}
Since $r\colon P_n(S^2)\to \mathbb{R}$ is a homogeneous quasimorphism, there exists a constant $C>0$, such that
$$|r(\gamma)|\leq C |\gamma|_{P_n(S^2)},$$
where $|\gamma|_{P_n(S^2)}$ is the word length of $\gamma$ with respect to the Artin generating set of $P_n(S^2)$. The support of the isotopy $\{f_{t,i}\}$ lies in the disc, hence the representative of the braid $\gamma(f_i;{x})$, which was built using $\{f_{t,i}\}$, defines a braid in $P_n$. It is a well-known fact that $P_n(S^2)$ is a factor group of $P_{n-1}$. It follows that $|\gamma(f_i;{x})|_{P_n(S^2)}\leq |\gamma(f_i;{x})|_{P_n}$, where $|-|_{P_n}$ is the word length in $P_n$ with respect to the Artin generating set of $P_n$. For $1\leq j<n$ the group $P_j$ may be viewed as a subgroup of $P_n$ by adding $n-j$ strings. It follows that
\begin{equation*}
|\Phi(f_i)|\leq C\sum_{j=1}^n\dbinom{n}{j}\vol(X_{n-j}(S^2\setminus D_i))\int\limits_{X_j(D_i)}|\gamma(f_i;{x})|_{P_n}\,d{x}.
\end{equation*}

The integral $\int\limits_{X_j(D_i)}|\gamma(f_i;{x})|_{P_n} \,d{x}$ is well defined for each $1\leq j\leq n$, see \cite[Lemma 4.1]{BrandenburskyKnots}, hence  $$|\Phi(f_i)|<\infty.$$
Let $g',h'\in \Diff(S^2,dV)$. Then
\begin{eqnarray*}
&&|\Phi(g'h')-\Phi(g')-\Phi(h')|\\
&\leq&\int\limits_{X_n(S^2)}|r(\gamma(g'h';{x}))-r(\gamma(g';h'(x)))-r(\gamma(h';{x}))|\,d{x}\\
&\leq&\vol(X_n(S^2))\cdot \delta(r)\thinspace,
\end{eqnarray*}
i.e. $\Phi$ satisfies the quasimorphism condition. It follows that
$$|\Phi(f)|\leq d\left(\delta(r)\cdot\vol(X_n(S^2))+\sum_{i=1}^d|\Phi(f_i)|\right).$$
Hence $\Phi$ is a well defined quasimorphism and so is $\overline{\Phi}$.
\end{proof}

\bibliographystyle{amsplain}
\bibliography{LpMetricTwoSphereRefs}

\vspace{3mm}

\textsc{Michael Brandenbusrky, Department of Mathematics, University of Montreal,
CP 6128, Succ. Centre-Ville Montr\'{e}al, QC H3C 3J7}\\
\emph{E-mail address:} \verb"michael.brandenbursky@mcgill.ca"

\vspace{3mm}

\textsc{Egor Shelukhin, Department of Mathematics, University of Montreal,
CP 6128, Succ. Centre-Ville Montr\'{e}al, QC H3C 3J7}\\
\emph{E-mail address:} \verb"shelukhi@crm.umontreal.ca"

--------------------------------------------------------------------------------

\end{document}